\declaretheorem{theorem}
\declaretheorem[name=Lemma, sibling=theorem]{lemma}
\declaretheorem[name=Proposition, sibling=theorem]{proposition}
\declaretheorem[name=Corollary, sibling=theorem]{corollary}
\declaretheorem[name=Definition, style=definition, sibling=theorem]{definition}
\newtheorem*{remark}{Remark}
\declaretheorem[name=Example, style=definition, sibling=theorem]{example}
\declaretheorem[name=Question, style=definition, sibling=theorem]{question}
\DeclareMathOperator{\reg}{reg}
\DeclareMathOperator{\Mod}{-Mod}
\DeclareMathOperator{\Ker}{Ker}
\DeclareMathOperator{\coKer}{coKer}
\DeclareMathOperator{\Image}{Im}
\DeclareMathOperator{\prd}{prd}
\DeclareMathOperator{\std}{std}
\DeclareMathOperator{\pd}{pd}
\newcommand{\C}{{\mathscr{C}}}
\newcommand{\D}{{\mathscr{D}}}
\newcommand{\h}{{\mathrm{H}}}
\newcommand{\N}{{\mathbb{N}}}
\newcommand{\kk}{{\mathbf{k}}}
\newcommand{\OI}{{\mathbf{OI}}}
\newcommand{\OB}{{\mathbf{OB}}}
\newcommand{\FI}{{\mathbf{FI}}}
\newcommand{\VI}{{\mathbf{VI}}}
\title{An inductive method for OI-modules}
\author{Wee Liang Gan}
\address{Department of Mathematics, University of California, Riverside, CA 92521, USA}
\email{wlgan@ucr.edu}
\author{Liping Li}
\address{LCSM (Ministry of Education), School of Mathematics and Statistics, Hunan Normal University, Changsha, Hunan 410081, China}
\email{lipingli@hunnu.edu.cn}
\thanks{L. Li s supported by the National Natural Science Foundation of China (Grant No. 11771135), the HuXiang High-Level Talents Gathering Project of Hunan Provincial Science and Technology Department (Grant No. 2019RS1039), and the Research Foundation of Hunan Provincial Education Department (Grant No. 18A016).
}
\begin{document}

\begin{abstract}
In this paper we introduce an inductive method to study $\OI$-modules presented in finite degrees, where $\OI$ is a skeleton of the category of totally ordered finite sets and order-preserving injective maps. As an application, we obtain an explicit upper bound for the Castelnuovo-Mumford regularity of $\OI$-modules.
\end{abstract}

\maketitle

\section{Introduction}

\subsection{Motivation} Let $F$ be a covariant (resp., contravariant) functor from a combinatorial category $\C$ to another category $\D$. In the situation that objects in $\D$ are equipped with a homology (resp., cohomology) theory (for instances, $\D$ are the categories of topological spaces, manifolds, algebras, groups, etc.), the composite of $F$ and the homology functors $\h_{\bullet}(-; R): \D \to R \Mod$ over a coefficient ring $R$ is a representation of $\C$, and it can be used to simultaneously explore the homology groups of a collection of objects in $\D$ parameterized by the object set of $\C$. This strategy recently forms the central theme of representation stability theory introduced by Church and Farb in \cite{cf}. They and quite a few authors have systematically studied the representation theoretic properties of the category $\FI$ of finite sets and injections, and applied them to explore stability patterns of (co)homological groups of many interesting examples such as configuration spaces, congruence subgroups, mapping class groups, etc; see for instances \cite{ce, cef, cefn, cmnr, gan, gl, gl2, lr, ly, mw, ramos}.

Another natural combinatorial category appearing in representation stability theory is the category $\OI$, whose objects are $[n]$ for $n \in \N$, and morphisms are strictly increasing maps. The category $\OI$ is closely related to  semisimplicial category (or called category $\Delta_+$ in literature) of finite totally ordered sets and strictly increasing maps, which is familiar to topologists as it has been used to define semisimplicial objects. Recently, some authors begin to consider representation theory of category $\OI$ and its applications in the study of homology of groups, and establish the following results: every finitely generated $\OI$-module over a commutative Noetherian ring is Noetherian, and its Castelnuovo-Mumford regularity is finite; the Hilbert function of a finitely generated $\OI$-module over a field is eventually polynomial; see for instances \cite{gl, pss, ss}. These results can be deduced from an inductive method introduced by the authors in \cite{gl, gl2}. However, compared to the fruitfulness of representation theory of $\FI$, many aspects of the structures of $\OI$-modules are still mysterious. In particular, as far as the authors know, quantitative results about $\OI$-modules such as upper bounds of regularity are still missing, which, as have been shown in the representation theory of $\FI$, are essential to bound stable ranges of (co)homology groups; see \cite{ce, cmnr, gl3, mw}.

The main goal of this paper is to introduce another inductive method for $\OI$-modules with two obvious advantages: it works for arbitrary $\OI$-modules rather than finitely generated $\OI$-modules over commutative Noetherian rings; and it can deduce some quantitative results such as explicit upper bounds of Castelnuovo-Mumford regularity and the natural number from which the eventually polynomial growth property of Hilbert functions starts. This inductive method is based on a key combinatorial proposition (see Proposition \ref{key proposition}) described in Section 3. Although similar results and proofs have been figured out by the authors for $\FI$-modules and $\VI$-modules (where $\VI$ is the category of finite dimensional vector spaces over a finite field and linear injections) in \cite{gl3, li}, we point out that the combinatorial structure of $\OI$ seems to be more complicated because of the lack of transitivity; that is, the endomorphism group of objects $x$ in $\OI$ are all trivial, and hence do not act transitively on the morphism sets ending at $x$. Therefore, the proof of this combinatorial proposition is very delicate, and the upper bounds we obtained in this paper are much larger compared to the upper bounds for $\FI$-modules (see \cite[Theorem A]{ce} and \cite[Theorem 1.3]{li}) and $\VI$-modules (see \cite[Theorem 1.1]{gl3}).

\subsection{Notations}
In this paper we let $\N$ be the set of non-negative integers. For any $n\in \N$, denote by $[n]$ the set $\{1, \, \ldots, \, n\}$; in particular, $[0]=\emptyset$ by convention. A map $\alpha: [m]\to [n]$ is increasing if $\alpha(1)<\cdots<\alpha(m)$. Let $\OI$ be the category whose objects are $[n]$ where $n=0, 1, \ldots$, and whose morphisms are the increasing maps. Note that $\OI$ is equivalent to the category of totally ordered finite sets and order-preserving injective maps.

Fix a commutative ring $\kk$. By an $\OI$-module, we mean a covariant functor from $\OI$ to the category of $\kk$-modules. Denote by $\OI\Mod$ the category of $\OI$-modules. This is an abelian category with enough projective objects. In particular, $\OI$-modules of the form $\kk \OI([n], -)$ (denoted by $M(n)$ later) are projective. For an $\OI$-module $V$ and $n\in \N$, we write $V_n$ for $V([n])$. If $V$ is nonzero, its degree $\deg V$ is defined to be $\sup \{ n \mid V_n \neq 0 \}$; otherwise, we set its degree to be $-1$.

For any $d\in \N$, we write $V_{\prec d}$ for the smallest $\OI$-submodule of $V$ containing $V_n$ for all $n<d$. In other words, $V_{\prec d}$ is the submodule of $V$ generated by all $V_n$'s with $n < d$. Define an $\OI$-submodule $U$ of $V$ by
\[ U_d = (V_{\prec d})_d \quad \mbox{ for every } d.\]
Define a functor $\h^{\OI}_0: \OI\Mod \to \OI\Mod$ by
\[ \h^{\OI}_0(V) = V/U. \]
The functor $\h^{\OI}_0$ is right-exact; let $\h^{\OI}_i$ be its $i$-th left derived functor, and set
\[ t_i(V) = \deg \h^{\OI}_i(V). \]
We call $t_0(V)$ the generation degree of $V$, $t_1(V)$ the relation degree, and $\prd{V} = \max\{t_0(V), \, t_1(V)\}$ the presentation degree of $V$. We say that $V$ is generated in finite degrees if $t_0(V)$ is finite, and $V$ is presented in finite degrees if the presentation degree of $V$ finite. The regularity $\reg(V)$ of $V$ is defined by
\[ \reg(V) = \sup\{ t_i(V)-i \mid i\in \N \}. \]

\begin{remark} \normalfont
In literature $\h^{\OI}_i(V)$ is called the $i$-th homology group of $V$, and the functor $\h^{\OI}_i$ is interpreted by the more traditional Tor functor using the notion of category algebras. In this paper we do not take this approach. For details, please refer to \cite{gl}.
\end{remark}

\subsection{Self-embedding and shift functor.} We now define a self-embedding functor $\sigma: \OI \to \OI$ as follows. For each object $[n]$ of $\OI$, let
\[ \sigma([n])=[n+1]. \]
For each morphism $\alpha: [m]\to [n]$ of $\OI$, define $\sigma(\alpha): [m+1] \to [n+1]$ by
\[ \sigma(\alpha)(h) = \left\{ \begin{array}{ll}
1 & \mbox{ if } h=1,\\
\alpha(h-1)+1 & \mbox{ if } h>1.
\end{array}\right. \]
The functor $\sigma$ induces a shift functor $\Sigma: \OI\Mod \to \OI\Mod$ by defining
\[ \Sigma V = V\circ\sigma \quad \mbox{ for every } V\in \OI\Mod. \]
Note that for every $n\in\N$, one has: $(\Sigma V)_n = V_{n+1}$. For every $r\in \N$, we write $\Sigma^r$ for $\underbrace{\Sigma \circ \cdots \circ \Sigma}_r$.

For each $n\in \N$, we write $\iota : [n] \to [n+1]$ for the morphism of $\OI$ defined by
\[ \iota(h) = h+1 \quad \mbox{ for every } h\in [n]. \]
For any $\OI$-module $V$, there is a natural $\OI$-module homomorphism $V\to \Sigma V$ defined by
\[ V_n \to (\Sigma V)_n, \quad v \mapsto \iota v, \quad \mbox{ for every } n\in \N. \]
Let $\kappa V$ and $\Delta V$ be, respectively, the kernel and cokernel of $V\to \Sigma V$.

\subsection{Main results}

We now state our main results. The first theorem, though it might seem technical, actually lays the foundation for us to develop an inductive method similar to that described in \cite{gl}.

\begin{theorem} \label{shift theorem}
Let $V$ be an $\OI$-module presented in finite degrees, $d= t_0(V)$, and $r$ be any integer such that $r \geqslant \prd(V)$. Define
\[ \overline{V} = \frac{\Sigma^r V}{(\Sigma^r V)_{\prec d}}. \]
Then $\kappa \overline{V} = 0$.
\end{theorem}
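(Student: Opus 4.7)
The plan is to unwind the definition of $\kappa\overline V$ and establish the following concrete assertion: for every $n\in\N$ and every $v\in V_{n+r}$, if $V(\sigma^r(\iota))(v)$ lies in $U_{n+1}:=\bigl((\Sigma^r V)_{\prec d}\bigr)_{n+1}$, then $v\in U_n:=\bigl((\Sigma^r V)_{\prec d}\bigr)_n$. The submodule $U_{n+1}$ is spanned over $\kk$ by elements $V(\beta)(w)$ with $w\in V_{m+r}$ for some $m<d$ and $\beta\colon[m+r]\to[n+r+1]$ an increasing map fixing $[r]$ pointwise, so we may write $V(\sigma^r(\iota))(v)=\sum_k V(\beta_k)(w_k)$ with this type of data. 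Since each $\beta_k$ is increasing and fixes $[r]$, we have $\beta_k(r+1)\geqslant r+1$, and I would split the sum according to what happens at position $r+1$: either $\beta_k(r+1)=r+1$, so that $\beta_k$ fixes $[r+1]$ and $\beta_k=\sigma^{r+1}(\gamma_k)$ for a unique increasing $\gamma_k\colon[m_k-1]\to[n]$ (Case A, which forces $m_k\geqslant 1$), or $r+1\notin\Image\beta_k$ (Case B).

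The Case B terms are easy to dispatch: such a $\beta_k$ factors uniquely as $\sigma^r(\iota)\circ\beta_k'$ with $\beta_k'\colon[m_k+r]\to[n+r]$ still fixing $[r]$, so its contribution equals $V(\sigma^r(\iota))\bigl(V(\beta_k')(w_k)\bigr)$ with $V(\beta_k')(w_k)\in U_n$. Collecting them produces an element $v_B\in U_n$ whose image under $V(\sigma^r(\iota))$ equals the total Case B portion, and replacing $v$ by $v-v_B$ reduces the problem to the situation in which every $\beta_k$ fixes $[r+1]$.

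The remaining Case A equation is the combinatorial heart of the theorem. In the free-module model $V=M(s)$ the identity $\beta_k\circ\sigma^r(\iota)=\sigma^r(\iota)\circ\widetilde{\beta_k}$, where $\widetilde{\beta_k}\colon[m_k+r-1]\to[n+r]$ is the map obtained by deleting the common fixed point $r+1$ on both sides, lets one separate the basis maps contributing to each $w_k$ by whether $r+1$ lies in their image: the pieces avoiding $r+1$ directly present $v$ as a member of $U_n$, while the remaining pieces lie in a summand of $V_{n+r+1}$ complementary to the image of $V(\sigma^r(\iota))$ and so must cancel to zero. For a general presented $V$ this decomposition is no longer canonical, because the relations of $V$ can couple the two classes of terms; the hypothesis $r\geqslant \prd(V)$ enters precisely here, supplying enough relations in degrees $\leqslant r$ to absorb the coupling through the key combinatorial Proposition~\ref{key proposition} of Section 3, which plays in $\OI$ the role of the transitive group actions used in the $\FI$ and $\VI$ settings. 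The main obstacle is exactly this last step: as the introduction emphasizes, the absence of transitivity on $\OI([m],[n])$ means that the combinatorial rewriting must be performed by hand, and Proposition~\ref{key proposition} is what carries the delicate weight of the argument.
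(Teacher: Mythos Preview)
Your Case~B reduction is correct: terms $V(\beta_k)(w_k)$ with $r+1\notin\Image\beta_k$ factor through $\sigma^r(\iota)$ and can be subtracted off. The genuine gap is Case~A, which you acknowledge but do not close. The problem is structural: you are working in the model $\overline V=(\Sigma^rV)/U$, where the ``hits $r{+}1$'' versus ``avoids $r{+}1$'' splitting of $V_{n+r+1}$ is not available, so there is no mechanism to force the residual Case~A sum to vanish or to land in $U_n$. Your free-module sketch for $V=M(s)$ depends on splitting each $w_k$ along exactly this dichotomy, and that is precisely what fails for general $V$; invoking Proposition~\ref{key proposition} at this point is a gesture, not an argument, because that proposition is phrased entirely in the free presentation and gives no direct handle on your $w_k\in(\Sigma^rV)_{m_k}$.

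The paper resolves this by changing the model first. A diagram chase with the decomposition $\Sigma^rF\cong P\oplus Q$ yields $\overline V\cong P/\widehat W$, where $P=\bigoplus_I M(d)$ is free. One then takes $v\in P_n$ with $\iota v\in\widehat W_{n+1}$ and writes $\iota v=\sum_k a_k\beta_k\omega_k$ using the generators of $\widehat W$ supplied by Proposition~\ref{key proposition}. The crucial feature of those generators is that every basis morphism $\alpha$ occurring in $\omega_k$ satisfies $\alpha(1)=1$. Hence $\beta_k\omega_k$ lies in the span of $\{\alpha:\alpha(1)=1\}$ when $\beta_k(1)=1$, and in the span of $\{\alpha:\alpha(1)>1\}$ when $\beta_k(1)>1$; since $\iota v$ also lies in the latter span and $P$ is free, the $\beta_k(1)=1$ terms sum to zero outright. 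The surviving $\beta_k$ factor as $\iota\beta_k'$, and injectivity of $\iota$ on $P$ gives $v=\sum a_k\beta_k'\omega_k\in\widehat W_n$. The alignment $\alpha(1)=1$---which is the entire content of Proposition~\ref{key proposition}---is what makes the split clean; your $w_k$'s carry no such alignment, and that is why Case~A stalls. To finish, you should pass to the $P/\widehat W$ model before attempting the splitting.
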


Based on the above result, we develop a formal inductive method which allows us to verify representation theoretic properties of $\OI$-modules presented in finite degrees in a convenient way. We refer the reader to Definition \ref{properties of properties} for precise meanings of terminologies in the theorem.

\begin{theorem} \label{inductive machinery}
Let (P) be a property of some $\OI$-modules and suppose that the zero module has property (P). Then every $\OI$-module presented in finite degrees has property (P) if and only if (P) is glueable, $\Sigma$-dominant, and $\Delta$-predominant.
\end{theorem}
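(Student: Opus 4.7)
The plan is to prove the two directions of the biconditional separately. The forward direction ("only if") is essentially immediate: the three closure conditions refer to operations (extensions, passage from $\Sigma V$ to $V$, and passage from $\Delta V$ to $V$) that preserve the class of $\OI$-modules presented in finite degrees, so each condition follows tautologically from the hypothesis that every such module has (P).

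For the converse ("if") direction, I would argue by strong induction on the generation degree $d = t_0(V)$ of an $\OI$-module $V$ presented in finite degrees. The base case $d = -1$ corresponds to $V = 0$, which has (P) by assumption.

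For the inductive step with $d \geqslant 0$, the argument is driven by Theorem \ref{shift theorem}. Choose $r \geqslant \prd(V)$ and consider the short exact sequence
\[ 0 \to (\Sigma^r V)_{\prec d} \to \Sigma^r V \to \overline{V} \to 0. \]
The submodule $(\Sigma^r V)_{\prec d}$ is generated in degrees $< d$ and is still presented in finite degrees, so the inductive hypothesis supplies (P) for it. For $\overline{V}$, Theorem \ref{shift theorem} gives $\kappa \overline{V} = 0$, which yields a second short exact sequence
\[ 0 \to \overline{V} \to \Sigma \overline{V} \to \Delta \overline{V} \to 0. \]
Since $\Delta$ strictly decreases generation degree by one, $\Delta \overline{V}$ has $t_0 \leqslant d-1$ and remains presented in finite degrees, so by induction it has (P). Applying $\Delta$-predominance to the second exact sequence transfers (P) to $\overline{V}$; glueability applied to the first exact sequence then promotes (P) to $\Sigma^r V$; and an $r$-fold application of $\Sigma$-dominance finally deduces (P) for $V$ itself.

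The main obstacle I anticipate is the bookkeeping needed to verify that each auxiliary module appearing in the argument (namely $(\Sigma^r V)_{\prec d}$, $\overline{V}$, $\Sigma\overline{V}$, and $\Delta\overline{V}$) is itself presented in finite degrees, so that the inductive hypothesis legitimately applies, and that the generation degree in fact strictly drops at each recursive appeal so the induction terminates. The substantive ingredient making the whole recursion function is Theorem \ref{shift theorem}: without the vanishing $\kappa \overline{V} = 0$ one could not extract the second short exact sequence, which is precisely what channels the inductive information from $\Delta \overline{V}$ back up to $\overline{V}$ and ultimately to $V$.
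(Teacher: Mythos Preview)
Your proposal is correct and follows essentially the same route as the paper: induction on $t_0(V)$, using Theorem~\ref{shift theorem} to obtain $\kappa\overline{V}=0$, then applying the induction hypothesis to $\Delta\overline{V}$ and $(\Sigma^r V)_{\prec d}$, and assembling the result via $\Delta$-predominance, glueability, and $\Sigma$-dominance in exactly that order. The obstacle you anticipate---verifying that the auxiliary modules are presented in finite degrees---is precisely what the paper handles by proving the regularity bound (Theorem~\ref{regularity theorem}) and its Corollary~\ref{abelian category} \emph{before} stating this inductive machinery; once the category of $\OI$-modules presented in finite degrees is known to be abelian, the bookkeeping becomes routine (and Lemma~\ref{inductive inequalities} gives explicit bounds for $\Delta\overline{V}$ directly).
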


We point out that this theorem is definitely not a superficial extension of \cite[Theorem 1.8]{gl} from the category of finitely generated $\OI$-modules to the category of $\OI$-modules presented in finite degrees. Actually, the former one relies heavily on the Noetherian property of finitely generated modules over commutative Noetherian rings, while the second one is based on a completely novel machinery working for all $\OI$-modules.

If we apply $\Sigma^n$ to an $\OI$-module $V$ presented in finite degrees, it may not become a semi-induced module for $n \gg 0$. This is a big difference between $\FI$-modules and $\OI$-modules. However, we can still get a weaker stability phenomenon called filtration stability.

\begin{theorem} \label{filtration stability}
Let $V$ be an $\OI$-module presented in finite degrees. Then there exist a finite collection of $\OI$-modules $\mathscr{F}_V = \{V^1, \ldots, V^s \}$ and an integer $N \in \N$ such that for every $n \geqslant N$, there is a finite filtration on $\Sigma^n V$ with the property that each successive quotient is isomorphic to a member $V^i \in \mathscr{F}_V$. Moreover, $t_0(V^i) \leqslant t_0(V)$ for $i \in [s]$ and $s \leqslant 2^{t_0(V) + 1} - 1$.
\end{theorem}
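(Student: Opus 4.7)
The plan is to induct on $d = t_0(V)$, with the base case $d = -1$ (so $V = 0$ and $\mathscr{F}_V = \emptyset$) trivial. For the inductive step with $d \geq 0$, fix $r \geq \prd(V)$ and set $W = \Sigma^r V$, so that Theorem~\ref{shift theorem} supplies $\overline{V} := W/W_{\prec d}$ with $\kappa\overline{V} = 0$; equivalently, $\overline{V}\hookrightarrow \Sigma\overline{V}$. Exactness of $\Sigma$ propagates this to an ascending chain $\overline{V}\hookrightarrow \Sigma\overline{V}\hookrightarrow \Sigma^2\overline{V}\hookrightarrow\cdots$ whose successive quotients are the modules $\Sigma^i\Delta\overline{V}$.

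Applying the exact functor $\Sigma^{n-r}$ to the short exact sequence $0\to W_{\prec d}\to W\to \overline{V}\to 0$ gives, for $n\geq r$,
\[ 0\to \Sigma^{n-r}W_{\prec d}\to \Sigma^n V\to \Sigma^{n-r}\overline{V}\to 0, \]
reducing the problem to proving filtration stability for the two outer pieces. Since $t_0(W_{\prec d})\leq d-1$, the inductive hypothesis will supply a collection $\mathscr{F}_{W_{\prec d}}$ of size at most $2^d-1$ and an integer $N_2$ governing the bottom piece. For the top piece, the ascending chain above yields, for $n-r\geq N_1$, the filtration
\[ 0 \subseteq \Sigma^{N_1}\overline{V}\subseteq \Sigma^{N_1+1}\overline{V}\subseteq \cdots \subseteq \Sigma^{n-r}\overline{V}, \]
with bottom the fixed module $\Sigma^{N_1}\overline{V}$ and successive quotients $\Sigma^j\Delta\overline{V}$ for $j=N_1,\ldots,n-r-1$.

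The combinatorial input that powers the argument is the bound $t_0(\Delta\overline{V})\leq d-1$. Since $\overline{V}_j = 0$ for $j<d$ by construction of $\overline{V}$, one has $(\Delta\overline{V})_{d-1} = \overline{V}_d$; and in any higher degree $j\geq d$, each generator $\alpha_*v$ of $\overline{V}_{j+1}$ (with $v\in \overline{V}_d$ and $\alpha:[d]\to[j+1]$) either has $\alpha(1)>1$, in which case it factors through $\iota$ and vanishes in $\Delta\overline{V}_j$, or has $\alpha(1) = 1$, in which case it corresponds canonically to a morphism $[d-1]\to[j]$ acting on $(\Delta\overline{V})_{d-1}$. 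Assuming that $\Delta\overline{V}$ is presented in finite degrees, the inductive hypothesis applied to $\Delta\overline{V}$ yields a collection $\mathscr{F}_{\Delta\overline{V}}$ of size at most $2^d-1$ and a threshold $N_1$; these refine the filtration of $\Sigma^{n-r}\overline{V}$ above into one whose quotients lie in $\{\Sigma^{N_1}\overline{V}\}\cup \mathscr{F}_{\Delta\overline{V}}$, that is, among at most $2^d$ isomorphism types.

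Setting $N = r + \max(N_1, N_2)$ and $\mathscr{F}_V := \mathscr{F}_{W_{\prec d}}\cup \{\Sigma^{N_1}\overline{V}\}\cup \mathscr{F}_{\Delta\overline{V}}$ and splicing the two filtrations, I would obtain a filtration of $\Sigma^n V$ with quotients in $\mathscr{F}_V$, where $|\mathscr{F}_V|\leq (2^d-1)+1+(2^d-1) = 2^{d+1}-1$ and each member has generation degree at most $d = t_0(V)$, completing the induction. The main obstacle is verifying that the three modules $W_{\prec d}$, $\overline{V}$, and $\Delta\overline{V}$ are all presented in finite degrees so that the inductive hypothesis legitimately applies: for $\overline{V}$ and $\Delta\overline{V}$ this should be routine (long exact sequence in $\h^{\OI}_\bullet$ of the short exact sequence above, plus shifting preserving presentation in finite degrees), but finite presentation of the submodule $W_{\prec d}$ is more delicate and will likely require a preparatory lemma or a slight reorganization of the induction.
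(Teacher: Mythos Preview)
Your approach mirrors the paper's exactly: the paper wraps the same induction in the language of Theorem~\ref{inductive machinery} (checking that the property is glueable, $\Sigma$-dominant, and $\Delta$-predominant), but unwinding that framework yields precisely your direct induction on $d$ via the two short exact sequences, and the paper arrives at the same formula $\mathscr{F}_V = \mathscr{F}_{(\Sigma^r V)_{\prec d}} \cup \{\Sigma^{N_1}\overline{V}\} \cup \mathscr{F}_{\Delta\overline{V}}$ with the same cardinality count $(2^d-1)+1+(2^d-1)=2^{d+1}-1$. The obstacle you flag---finite presentation of $(\Sigma^r V)_{\prec d}$---is resolved in the paper not by a separate preparatory lemma but by proving the regularity bound (Theorem~\ref{regularity theorem}) \emph{before} this result, so that Corollary~\ref{abelian category} (the category of $\OI$-modules presented in finite degrees is abelian) is already available and all three pieces are automatically presented in finite degrees; for $\Delta\overline{V}$ specifically the explicit bound $t_1(\Delta\overline{V})\leq r-1$ is Lemma~\ref{inductive inequalities}, and your ad~hoc argument for $t_0(\Delta\overline{V})\leq d-1$ can be replaced by the general inequality $t_0(\Delta U)\leq t_0(U)-1$ recorded after Lemma~\ref{decomposition lemma}.
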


It has been shown by the authors in \cite{gl} that a finitely generated $\OI$-module over a commutative Noetherian ring has finite regularity. Theorem \ref{inductive machinery} allows us to establish the finiteness of regularity of $\OI$-modules presented in finite degrees. Furthermore, combining the quantitative result in Theorem \ref{shift theorem}, we can obtain a simple upper bound of regularity for all $\OI$-modules.

\begin{theorem} \label{regularity theorem}
For any nonzero $\OI$-module $V$, one has:
\[ \reg(V) \leqslant 2^{2^{t_0(V)}} \prd(V). \]
\end{theorem}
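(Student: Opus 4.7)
The inequality is vacuous unless $V$ is presented in finite degrees, so throughout I may assume $t_0(V) = d < \infty$ and $\prd(V) = r < \infty$. My plan is to apply the inductive machinery of Theorem \ref{inductive machinery} to the property (P) defined by
\[ \reg(V) \leqslant 2^{2^{t_0(V)}} \prd(V). \]
It then suffices to verify that (P) is glueable, $\Sigma$-dominant, and $\Delta$-predominant; the zero module trivially satisfies (P), so Theorem \ref{inductive machinery} will immediately yield the claim.

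The bulk of the argument lies in $\Delta$-predominance, and here the shape of the bound is the guiding principle: since $2^{2^d} = (2^{2^{d-1}})^2$, passing from $\Delta V$ back to $V$ should cost a squaring of the multiplicative constant. I would carry this out by induction on $d = t_0(V)$. Given $V$ with $d = t_0(V)$ and $r = \prd(V)$, Theorem \ref{shift theorem} produces $\overline{V} = \Sigma^r V / (\Sigma^r V)_{\prec d}$ with $\kappa \overline{V} = 0$, so the short exact sequence
\[ 0 \longrightarrow \overline{V} \longrightarrow \Sigma \overline{V} \longrightarrow \Delta \overline{V} \longrightarrow 0 \]
is available. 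Taking the induced long exact sequence in $\OI$-homology, together with the expected inequality $t_0(\Delta \overline{V}) \leqslant d-1$, should let me bound $\reg(\overline{V})$ in terms of $\reg(\Delta \overline{V})$, to which the inductive hypothesis applies. One then propagates the bound back to $V$ itself by combining the surjection $\Sigma^r V \twoheadrightarrow \overline{V}$, a separate control on $(\Sigma^r V)_{\prec d}$ (whose generation degree is strictly less than $d$, so it too falls under the inductive hypothesis), and glueability across the associated short exact sequences.

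The base case $d = 0$ should be essentially direct: $V$ is a quotient of a direct sum of copies of the trivial $\OI$-module $M(0)$ whose kernel is generated in degrees $\leqslant r$, and the long exact sequence of $\h^{\OI}_i$ reduces the computation of $\reg(V)$ to the regularity of a constant-type submodule, yielding $\reg(V) \leqslant 2r = 2^{2^0} r$. The $\Sigma$-dominance and glueability of (P) amount to the standard behavior of $\h^{\OI}_i$ under shifts and short exact sequences and should be the least delicate of the three verifications.

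The main obstacle I anticipate is the careful bookkeeping required to realize the squaring recursion precisely: since $\overline{V}$ is built using $\Sigma^r$ with $r = \prd(V)$, neither $t_0(\Delta \overline{V})$ nor $\prd(\Delta \overline{V})$ is controlled by $d$ and $r$ alone in any automatic way, and obtaining a final bound that remains linear in $r$ (rather than, say, polynomial in $r$) after unrolling $d$ levels of induction will demand quantitative estimates at each stage. This will likely force me to enrich (P) with explicit auxiliary parameters tracking $t_0$ and $\prd$ independently before feeding it into Theorem \ref{inductive machinery}, rather than phrasing (P) purely in terms of the single combined quantity $\reg(V)$.
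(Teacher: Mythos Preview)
Your instinct to induct on $d=t_0(V)$ using $\overline{V}$, $\Delta\overline{V}$, and $(\Sigma^r V)_{\prec d}$ is correct, but packaging this as an application of Theorem~\ref{inductive machinery} does not work. The quantitative property (P) you state is neither glueable nor $\Sigma$-dominant in the sense of Definition~\ref{properties of properties}: in an arbitrary short exact sequence $0\to U\to V\to W\to 0$ there is no control of $t_0(U),\prd(U)$ in terms of $t_0(V),\prd(V)$, so (P) for $U$ and $W$ says nothing about (P) for $V$; and from Lemma~\ref{reg} one only gets $\reg(V)\leqslant\reg(\Sigma V)+1$, so $\Sigma$-dominance fails by an additive $+1$ that cannot be absorbed. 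Theorem~\ref{inductive machinery} is a qualitative device, and in the paper it is in fact abstracted \emph{from} the proof of Theorem~\ref{regularity theorem}, not used to prove it. The paper instead runs a direct induction on $d$, proving $\reg(V)\leqslant C_d(r)$ for auxiliary functions defined by $C_0(r)=r$ and $C_d(r)=C_{d-1}(C_{d-1}(r-1)+3)+r$, and separately checks $C_d(r)\leqslant 2^{2^d}r$.

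You also mislocate the hard step. Lemma~\ref{inductive inequalities} gives $t_0(\Delta\overline{V})\leqslant d-1$ and $t_1(\Delta\overline{V})\leqslant r-1$ directly, so $\prd(\Delta\overline{V})$ \emph{is} controlled by $d$ and $r$, contrary to what you write. The genuine obstacle is bounding $t_1((\Sigma^r V)_{\prec d})$. The only handle is the long exact sequence attached to $0\to(\Sigma^r V)_{\prec d}\to\Sigma^r V\to\overline{V}\to 0$, which gives $t_1((\Sigma^r V)_{\prec d})\leqslant\max\{t_1(\Sigma^r V),\,t_2(\overline{V})\}$; and $t_2(\overline{V})\leqslant\reg(\overline{V})+2\leqslant C_{d-1}(r-1)+3$ by the inductive hypothesis already applied to $\Delta\overline{V}$. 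This quantity then serves as the $\prd$-input for a \emph{second} invocation of the inductive hypothesis, now on $(\Sigma^r V)_{\prec d}$, and the resulting nested composition $C_{d-1}(C_{d-1}(\cdots))$ is exactly what produces the squaring $2^{2^d}=(2^{2^{d-1}})^2$ you correctly anticipated.
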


Note that if $V$ is the zero $\OI$-module, then $\reg(V)=-1$. Thus in the above theorem we require $V$ to be nonzero. We also point out that the theorem holds trivially if $t_0(V)$ or $t_1(V)$ is infinite. As an immediate corollary, we deduce that the category of $\OI$-modules presented in finite degrees is an abelian subcategory of $\OI \Mod$, so we can do homological algebra safely in it.

It was proved in \cite[Corollary 1.13]{gl} that for a finitely generated $\OI$-module $V$ over a field, there exists a positive integer $N_V$ such that its Hilbert function eventually coincides with a rational polynomial for $n \geqslant N_V$. The following theorem provides a bound for $N_V$.

\begin{theorem}\label{hilbert functions}
Let $V$ be a finitely generated $\OI$-module over a field $\kk$. Then there exists a rational polynomial $P$ such that $\dim_{\kk} V_n = P(n)$ whenever
\[ n \geqslant 2^{2^{t_0(V)}} \prd(V). \]
Moreover, the degree of $P$ is at most $t_0(V)$.
\end{theorem}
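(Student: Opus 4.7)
The plan is to proceed by induction on $d := t_0(V)$, combining the regularity bound $\reg(V) \leqslant R := 2^{2^{d}}\prd(V)$ furnished by Theorem \ref{regularity theorem} with the short exact sequence
\[ 0 \to \kappa V \to V \to \Sigma V \to \Delta V \to 0. \]
The degree bound on $P$ will fall out automatically, since the discrete antiderivative of a polynomial of degree $e$ is a polynomial of degree $e+1$.

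For the base case $d = 0$, the set $\OI([0],[n])$ contains only the empty map, so $V_n \cong V_0 / K_n$ for an ascending chain $K_0 \subseteq K_1 \subseteq \cdots \subseteq V_0$ that stabilizes by finite dimensionality; the bound $t_1(V) \leqslant \prd(V) \leqslant R$ forces stabilization by degree $t_1(V)$, giving $\dim_\kk V_n$ constant for $n \geqslant R$. For the inductive step $d \geqslant 1$, apply Theorem \ref{shift theorem} with $r = \prd(V)$ to produce $\bar V := \Sigma^r V/(\Sigma^r V)_{\prec d}$ with $\kappa \bar V = 0$; the injection $\bar V \hookrightarrow \Sigma \bar V$ yields
\[ \dim_\kk \bar V_{n+1} - \dim_\kk \bar V_n = \dim_\kk (\Delta \bar V)_n. \]
After verifying that $t_0(\Delta \bar V) \leqslant d-1$ and $\prd(\Delta \bar V) \leqslant \prd(V)$ (the shift $\Sigma$ and the quotient by a submodule generated in degrees $<d$ do not inflate $\prd$, while $\Delta$ strictly lowers the generation degree when $\kappa$ vanishes), the induction hypothesis applied to $\Delta \bar V$ produces a polynomial $Q$ of degree $\leqslant d-1$ with $\dim_\kk(\Delta \bar V)_n = Q(n)$ for $n \geqslant 2^{2^{d-1}}\prd(\Delta \bar V) \leqslant R$. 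Discrete summation then promotes this to polynomial growth of $\dim_\kk \bar V_n$ of degree $\leqslant d$ in the same range. One finally recovers the Hilbert function of $V$ from
\[ \dim_\kk V_{n+r} = \dim_\kk \bar V_n + \dim_\kk\bigl((\Sigma^r V)_{\prec d}\bigr)_n, \]
applying the induction hypothesis to the submodule $(\Sigma^r V)_{\prec d}$, which is generated in degrees $< d$ and hence falls under a strictly smaller value of the inductive parameter.

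The main obstacle will be the numerical bookkeeping in the last stage: confirming that the thresholds produced by the inductive hypothesis — namely $2^{2^{d-1}}\prd(\Delta \bar V)$ for $\Delta \bar V$ and an analogous bound for $(\Sigma^r V)_{\prec d}$ — both collapse inside $R = 2^{2^{d}}\prd(V)$. The double-exponential form of $R$ is calibrated precisely so that this collapse succeeds at each step, and verifying the claimed inequalities $t_0(\Delta \bar V) \leqslant d-1$ and $\prd(\Delta \bar V) \leqslant \prd(V)$ in the presence of the shift and quotient operations is where the main technical effort lies.
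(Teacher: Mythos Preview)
Your inductive skeleton matches the paper's: induct on $d=t_0(V)$, invoke Theorem~\ref{shift theorem} to obtain $\overline{V}$ with $\kappa\overline{V}=0$, apply the induction hypothesis to $\Delta\overline{V}$ and to $(\Sigma^r V)_{\prec d}$, and reassemble via the two short exact sequences. Your base case is fine, and the bounds $t_0(\Delta\overline{V})\leqslant d-1$, $t_1(\Delta\overline{V})\leqslant r-1$ are exactly Lemma~\ref{inductive inequalities}; contrary to your last paragraph, this is \emph{not} where the main effort lies.

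The real gap is the step you gloss over: controlling $\prd\bigl((\Sigma^r V)_{\prec d}\bigr)$. Passing to a submodule gives no free bound on $t_1$; from $0\to(\Sigma^r V)_{\prec d}\to\Sigma^r V\to\overline{V}\to 0$ one only gets
\[
t_1\bigl((\Sigma^r V)_{\prec d}\bigr)\leqslant\max\{t_1(\Sigma^r V),\, t_2(\overline{V})\},
\]
and it is $t_2(\overline{V})$ that dominates. The paper bounds it via $t_2(\overline{V})\leqslant\reg(\overline{V})+2\leqslant C_{d-1}(r-1)+3$, feeding the recursively obtained regularity of $\overline{V}$ (through $\Delta\overline{V}$ and Lemma~\ref{reg}) back into the next inductive call. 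This self-reference is why the paper does \emph{not} run the induction with the threshold $2^{2^d}r$ directly but with the auxiliary recursion $C_d(r)=C_{d-1}(C_{d-1}(r-1)+3)+r$, establishing $C_d(r)\leqslant 2^{2^d}r$ only afterward in Lemma~\ref{increasing}. Your assertion that $2^{2^d}$ is ``calibrated precisely'' for a direct induction is incorrect: attempting to close
\[
2^{2^{d-1}}\cdot\prd\bigl((\Sigma^r V)_{\prec d}\bigr)+r\leqslant 2^{2^d}r
\]
with the available bound $\prd((\Sigma^r V)_{\prec d})\leqslant 2^{2^{d-1}}(r-1)+3$ leaves a residual $r-2^{2^d}+3\cdot 2^{2^{d-1}}$, which for $d=1$ equals $r+2>0$. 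The intermediate function $C_d$ is not cosmetic; without it the bookkeeping does not close.
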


The next theorem classifies all $\OI$-modules presented in finite degrees which are acyclic with respect to the homology functors, a result analogue to \cite[Theorem 1.3]{ly} of $\FI$-modules.

\begin{theorem} \label{homologically acyclic modules}
Let $V$ be an $\OI$-module presented in finite degrees. Then the following statements are equivalent:
\begin{enumerate}
\item $V$ is semi-induced;
\item $\h^{\OI}_i(V) = 0$ for all $i \geqslant 1$;
\item $\h^{\OI}_i(V)$ for a certain $i \in \mathbb{N}$.
\end{enumerate}
\end{theorem}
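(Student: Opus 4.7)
The easy directions $(1) \Rightarrow (2) \Rightarrow (3)$ come for free. For $(1) \Rightarrow (2)$, each induced module $M(n) = \kk\OI([n], -)$ is projective in $\OI\Mod$, so $\h^{\OI}_i(M(n)) = 0$ for $i \geqslant 1$; if $V$ is semi-induced, then inducting on the length of a filtration whose successive quotients are direct sums of $M(n)$'s, and using the long exact sequence of homology at each step, gives $\h^{\OI}_i(V) = 0$ for all $i \geqslant 1$. The implication $(2) \Rightarrow (3)$ is immediate. The remaining direction $(3) \Rightarrow (1)$ contains the main content.

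For $(3) \Rightarrow (1)$, I first dispose of the case $i = 0$: the vanishing $\h^{\OI}_0(V) = 0$ means $V_d = (V_{\prec d})_d$ for every $d$, which, combined with the hypothesis that $V$ is generated in finite degrees, forces $V = 0$, trivially semi-induced. For $i \geqslant 1$, the plan is to apply the inductive machinery Theorem \ref{inductive machinery} to the property
\[
\text{(P)}(V): \text{if } \h^{\OI}_j(V) = 0 \text{ for some } j \geqslant 1, \text{ then } V \text{ is semi-induced,}
\]
which is vacuously satisfied by the zero module. For glueability, I plan to use the long exact sequence in homology combined with the acyclicity of semi-induced modules established above, together with the closure of semi-induced modules under extensions, which lets one propagate the vanishing hypothesis through a short exact sequence whose outer terms satisfy (P). For $\Sigma$-dominance, $\Sigma$ is exact, sends each $M(n)$ to a module admitting a filtration by $M(k)$'s (hence preserves being semi-induced), and intertwines the homology functors in a controlled way, which transfers (P) between $V$ and $\Sigma V$.

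The main obstacle will be verifying $\Delta$-predominance: from (P) on $\Delta V$, conclude (P) on $V$. Here Theorem \ref{shift theorem} is decisive. Taking $r \geqslant \prd(V)$ and setting $\overline{V} = \Sigma^r V / (\Sigma^r V)_{\prec t_0(V)}$, that theorem yields $\kappa \overline{V} = 0$, giving the short exact sequence
\[
0 \to \overline{V} \to \Sigma \overline{V} \to \Delta \overline{V} \to 0.
\]
The associated long exact sequence in $\h^{\OI}_\bullet$, together with the fact that $\Delta \overline{V}$ has strictly smaller generation degree than $\overline{V}$, lets me invoke (P) on $\Delta \overline{V}$ (supplied at the inductive step of Theorem \ref{inductive machinery}) to force $\overline{V}$ itself to be semi-induced, and then I reverse the shift and truncation to obtain the same conclusion for $V$. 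The technical heart, and principal obstacle, is checking that semi-inductedness is compatible with the passage between $V$ and $\overline{V}$, namely that the truncated piece $(\Sigma^r V)_{\prec t_0(V)}$ and the quotient $\overline{V}$ being semi-induced together imply $V$ is semi-induced — this is what makes $\Delta$-predominance actually useful for recovering information about the original $V$.
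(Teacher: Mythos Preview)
Your proposal takes a genuinely different route from the paper, and it has real gaps.

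The paper does not apply Theorem~\ref{inductive machinery} to the conditional property you call (P). Instead it first quotes Proposition~\ref{part 1} (the equivalence $(1)\Leftrightarrow(2)\Leftrightarrow \h^{\OI}_1(V)=0$), proves a two-out-of-three lemma for semi-induced modules in short exact sequences, and then handles $(3)\Rightarrow(1)$ by \emph{dimension shifting}: if $\h^{\OI}_i(V)=0$ with $i>1$, pick $0\to W\to P\to V\to 0$ with $P$ projective; then $\h^{\OI}_{i-1}(W)=0$, so by induction on $i$ the syzygy $W$ is semi-induced, and two-out-of-three gives $V$ semi-induced. This avoids the inductive machinery entirely for this theorem.

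Your attempt to push (P) through Theorem~\ref{inductive machinery} has a concrete obstruction at $\Sigma$-dominance. You implicitly need ``$\Sigma V$ semi-induced $\Rightarrow$ $V$ semi-induced'' to close the loop (from $\h^{\OI}_j(V)=0$ you would pass to $\Sigma V$, invoke (P) there to get $\Sigma V$ semi-induced, then come back). But this implication is false: take $V$ with $V_0=\kk$ and $V_n=0$ for $n\geqslant 1$; then $\Sigma V=0$ is semi-induced while $V$ is not. More fundamentally, $\Sigma$ does not commute with $\h^{\OI}_0$ on the nose (the morphisms that act on $\Sigma V$ are only those of the form $\sigma(\alpha)$, which all fix $1$), so the phrase ``intertwines the homology functors in a controlled way'' hides exactly the difficulty you must resolve.

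Glueability is also not established. Your (P) is an implication, so knowing (P) for the outer terms $U$ and $W$ of $0\to U\to V\to W\to 0$ gives nothing unless you can first produce a vanishing $\h^{\OI}_k$ for $U$ or $W$ from $\h^{\OI}_j(V)=0$; the long exact sequence alone does not do this. Finally, your $\Delta$-predominance paragraph mixes two different things: verifying $\Delta$-predominance means assuming $\kappa V=0$ and that $\Delta V$ has (P), and deducing (P) for $V$; invoking Theorem~\ref{shift theorem} and the module $\overline{V}$ is what the machinery does \emph{internally}, not what you are asked to check. A minor point: in your $(1)\Rightarrow(2)$ sketch, induced modules are the $\rho^\ast(T)$, not just the projectives $M(n)$, so the acyclicity you need is Proposition~\ref{part 1}, not bare projectivity.
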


For an definition of semi-induced modules, please refer to Subsection 4.6.

\subsection{Applications in other areas} Recently, G\"{u}nt\"{u}rk\"{u}n and Snowden gave a very comprehensive picture of the representation theory of the increasing monoid over a field $\kk$ in \cite{gs}. They noted that the module category of the increasing monoind is essentially equivalent to the module category of $\OI$, the module category of the semisimplical category, and the module category of the shuffle algebra freely generated by one element in degree 1; see \cite[Remark 3.3, Propositions 3.7, 3.8, 3.10]{gs}. Since these equivalences are actually independent of the coefficient ring $\kk$, our work can be applied to the above mentioned structures for any commutative ring. In particular, some results included in this paper can give us a much better understanding on representations of these structures. For instance, it is conjectured in \cite[Subsubsection 1.4.2]{gs} that if an $\OI$-module $V$ has level at most $r$, then there exists a positive integer $n(r)$ such that $\reg(V)$ is actually bounded in terms of $t_0(V), \, \ldots \, t_{n(r)}(V)$. Theorem \ref{regularity theorem} shows that the $\reg(V)$ of any $\OI$-module $V$ is bounded in terms of only $t_0(V)$ and $t_1(V)$, and moreover, an explicit simple formula in terms of them is provided. As a consequence, when investigating homological properties of $\OI$-modules over a non-Noetherian coefficient ring, it is possible to work in the category of $\OI$-modules presented in finite degrees rather than the category of finitely generated $\OI$-modules.

\subsection{Organization} The paper is organized as follows. In Section 2 we describe some preliminary results about the category $\OI$ and its representations. In Section 3 we prove a key combinatorial proposition. Main theorems and some corollaries are proved in Section 4. In the last section we raise some questions which might be of certain interest to the reader.

\section{Preliminaries}

In this section we list some preliminary results on $\OI$-modules. These results have appeared in literature, and were proved for general categories (including $\OI$ as an example) equipped with shift functors satisfying certain axioms. For details, please refer to \cite{gl}.

For any $m\in \N$, let $M(m)$ be the $\OI$-module that takes each $[n]$ to the free $\kk$-module on the set of increasing maps from $[m]$ to $[n]$. If $\beta: [s]\to[n]$ is any morphism of $\OI$, the induced map $M(m)_s \to M(m)_n$ is defined by
\[  \sum_{\alpha: [m]\to [s]} c_\alpha \alpha \mapsto \sum_{\alpha: [m]\to [s]} c_\alpha \beta \alpha\]
where $c_\alpha$ are coefficients in $\kk$ and $\alpha$ runs over the increasing maps $[m]\to [s]$. Note that $t_0(M(m))=m$. It is easy to check that $M(m)$ is a projective $\OI$-module, so one has $t_i(M(m)) = -1$ for every $i\geqslant 1$. Furthermore, for any $\OI$-module $V$, there exists a surjective homomorphism $F\to V$ where $F=\bigoplus_{j\in J} M(d_j)$ for some $d_j\in \N$ such that $t_0(F)=t_0(V)$.

Let $m, r\in \N$. Let $E \subset [r]$ and suppose $|E|\leqslant m$. Write $E=\{e_1, \, \ldots, \, e_\ell\}$ where $e_1<\cdots<e_\ell$. For any increasing map $\alpha: [m-\ell] \to [n]$, define an increasing map
\[ \alpha_E: [m] \to [n+r] \]
by
\[ \alpha_E(h) = \left\{ \begin{array}{ll}
e_h & \mbox{ if } h\leqslant \ell,\\
\alpha(h-\ell)+r & \mbox{ if } h>\ell.
\end{array} \right. \]
Define an $\OI$-module homomorphism
\[ \theta_E: M(m-\ell)\to \Sigma^r M(m) \]
by
\[ M(m-\ell)_n\to (\Sigma^r M(m))_n, \quad \alpha \mapsto \alpha_E\]
for every $n\in \N$ and increasing map $\alpha: [m-\ell] \to [n]$. Note that if $r=1$ and $E=\emptyset$, then $\theta_E$ is the natural map $M(m)\to \Sigma M(m)$.

\begin{lemma} \label{decomposition lemma}
For any $m, r\in \N$, there is a natural isomorphism
\[ \theta: \bigoplus_{\ell=0}^{m} \bigoplus_{\substack{E\subset [r] \\ |E|=\ell} } M(m-\ell) \to \Sigma^r M(m). \]
\end{lemma}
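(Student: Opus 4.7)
The plan is to show that on each object $[n]$, the map $\theta$ is a bijection on the natural $\kk$-bases, then verify naturality in $n$. Concretely, the basis of $(\Sigma^r M(m))_n = M(m)_{n+r}$ is the set of increasing maps $\gamma: [m] \to [n+r]$, and the basis of the domain of $\theta$ is the set of pairs $(E, \alpha)$ where $E \subset [r]$ and $\alpha : [m - |E|] \to [n]$ is increasing. I would construct a bijection between these two sets.

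First, I would define an inverse to $\theta$ on basis elements. Given an increasing map $\gamma : [m] \to [n+r]$, let $\ell = |\{h \in [m] \mid \gamma(h) \leqslant r\}|$. Because $\gamma$ is strictly increasing, the values $\gamma(1), \ldots, \gamma(\ell)$ all lie in $[r]$ and $\gamma(\ell+1), \ldots, \gamma(m)$ all lie in $\{r+1, \ldots, n+r\}$. Set $E = \{\gamma(1), \ldots, \gamma(\ell)\} \subset [r]$ with $|E| = \ell$, and define $\alpha : [m-\ell] \to [n]$ by $\alpha(h) = \gamma(h+\ell) - r$. Then $\alpha$ is increasing, and by construction $\gamma = \alpha_E$, which is exactly $\theta_E(\alpha)$. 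Conversely, the pair $(E, \alpha)$ is visibly recoverable from $\gamma = \alpha_E$, so this establishes a bijection of bases and hence a $\kk$-module isomorphism $\theta_n$ at each level.

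Next, I would verify that these level-wise isomorphisms assemble into a morphism of $\OI$-modules, i.e., that $\theta$ is natural. For a morphism $\beta : [n] \to [n']$ in $\OI$, the induced map $(\Sigma^r M(m))_n \to (\Sigma^r M(m))_{n'}$ is post-composition with $\sigma^r(\beta) : [n+r] \to [n'+r]$, which fixes $[r]$ pointwise and sends $h \mapsto \beta(h-r) + r$ for $h > r$. A direct computation from the definitions then gives
\[ \sigma^r(\beta) \circ \alpha_E = (\beta \circ \alpha)_E \]
for any $E \subset [r]$ of size $\ell$ and any increasing $\alpha : [m-\ell] \to [n]$: the first $\ell$ arguments are fixed, and on the last $m - \ell$ arguments one simply reads off $\beta(\alpha(h-\ell)) + r$. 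This is precisely $\theta_E$ applied to $\beta \circ \alpha$, which confirms that $\theta$ commutes with the $\OI$-action, completing the proof.

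I do not anticipate any real obstacle; the statement is a bookkeeping fact about how an increasing map into $[n+r]$ splits uniquely according to which of its values fall into the initial segment $[r]$. The one spot that requires a little care is matching the definition of $\sigma^r(\beta)$ against the definition of $\alpha_E$ so that the naturality identity $\sigma^r(\beta)\circ \alpha_E = (\beta\alpha)_E$ comes out cleanly; everything else is combinatorial repackaging.
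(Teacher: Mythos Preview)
Your proof is correct and follows exactly the approach the paper takes: the paper defines $\theta$ by assembling the maps $\theta_E$ and then simply asserts ``It is easy to check that $\theta$ is an isomorphism,'' whereas you actually carry out this check by exhibiting the inverse on basis elements and verifying naturality via the identity $\sigma^r(\beta)\circ\alpha_E=(\beta\alpha)_E$. There is no substantive difference in strategy---you have just supplied the details the paper omits.
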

\begin{proof}
Let $\theta$ be the homomorphism whose restriction to the direct summand $M(m-\ell)$ indexed by $E$ is $\theta_E$. It is easy to check that $\theta$ is an isomorphism.
\end{proof}

It follows from Lemma \ref{decomposition lemma} that for any $\OI$-module $V$ and $r\in \N$, one has:
\[ t_0(\Sigma^r V) \leqslant t_0(V); \]
if $V$ is nonzero, then one has:
\[ t_0(\Delta V) \leqslant t_0(V)-1, \]
and the equality of the second formula holds whenever $V$ is nonzero. The reader may refer to \cite[Lemma 2.2]{gl} for details. These inequalities also hold for regularity. That is:

\begin{lemma} \label{reg}
Let $V$ be an $\OI$-module. Then one has:
\[ \reg(V) \leqslant \reg(\Sigma V) + 1. \]
If $\kappa V=0$, then one has:
\[ \reg(V) \leqslant \reg(\Delta V) + 1. \]
\end{lemma}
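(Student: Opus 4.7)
The plan is to deduce both inequalities from the natural short exact sequences
\[ 0 \to \kappa V \to V \to V'' \to 0 \quad \text{and} \quad 0 \to V'' \to \Sigma V \to \Delta V \to 0, \]
where $V'' := \operatorname{Im}(V \to \Sigma V)$, via the long exact sequences they induce in the left derived functors of $\h^{\OI}_0$. The main structural input is Lemma \ref{decomposition lemma}: for every projective $\OI$-module $P$ one has $\kappa P = 0$ together with a canonical split short exact sequence $0 \to P \to \Sigma P \to \Delta P \to 0$ of projectives, with $t_0(\Delta P) = t_0(P) - 1$. Consequently $\Sigma$ preserves projectives, so that for any projective resolution $P_\bullet \to V$, the complex $\Sigma P_\bullet \to \Sigma V$ is again a projective resolution, and the level-wise split sequences assemble into a short exact sequence of complexes
\[ 0 \to P_\bullet \to \Sigma P_\bullet \to \Delta P_\bullet \to 0 \]
whose homology long exact sequence identifies $H_0(\Delta P_\bullet) = \Delta V$, $H_1(\Delta P_\bullet) = \kappa V$, and zero elsewhere.

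Applying the right-exact functor $\h^{\OI}_0$ (which preserves this SES of complexes, each row being split) and taking the associated long exact sequence in homology gives
\[ \cdots \to \h^{\OI}_i(V) \to \h^{\OI}_i(\Sigma V) \to Y_i \to \h^{\OI}_{i-1}(V) \to \cdots, \qquad Y_i := H_i(\h^{\OI}_0 \Delta P_\bullet). \]
For part (b) the hypothesis $\kappa V = 0$ forces $\Delta P_\bullet$ to be a projective resolution of $\Delta V$, so $Y_i = \h^{\OI}_i(\Delta V)$ and the sequence specialises to the standard long exact sequence for $0 \to V \to \Sigma V \to \Delta V \to 0$, yielding both $t_i(V) \leq \max\{t_i(\Sigma V),\ t_{i+1}(\Delta V)\}$ and the symmetric bound $t_i(\Sigma V) \leq \max\{t_i(V),\ t_i(\Delta V)\}$; combined with the easy estimate $\reg(\Sigma \Delta V) \leq \reg(\Delta V)$ (immediate from applying $\Sigma$ to a projective resolution of $\Delta V$) and a short induction chaining the two bounds across homological degrees, the max collapses to $\reg(V) \leq \reg(\Delta V) + 1$. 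For part (a), with $\kappa V$ arbitrary, the general degree estimate $\deg Y_{i+1} \leq \deg \h^{\OI}_0(\Delta P_{i+1}) = t_0(P_{i+1}) - 1$ combined with the long exact sequence above gives $t_i(V) \leq \max\{t_i(\Sigma V),\ t_0(P_{i+1}) - 1\}$; choosing $P_\bullet$ so that $t_0(P_j)$ closely tracks $t_j(V)$ and propagating across homological degrees yields $\reg(V) \leq \reg(\Sigma V) + 1$, the extra $+1$ being exactly the shift $t_0(\Delta P) = t_0(P) - 1$.

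The main obstacle is translating the max-style estimates from the long exact sequences into the clean one-term bounds stated in the lemma. In both parts the collapse requires a propagation step chaining bounds across homological degrees; for (b) this leans on the symmetric pair of LES estimates together with $t_0(\Delta V) \leq t_0(V) - 1$, while for (a) it exploits the fact that the degree of $Y_{i+1}$ is shifted down by one relative to $t_0(P_{i+1})$. The good behaviour of $\Sigma$, $\Delta$, and $\kappa$ on projectives supplied by Lemma \ref{decomposition lemma} is what ultimately absorbs the auxiliary terms into the right-hand sides.
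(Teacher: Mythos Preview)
The paper does not give a self-contained proof here; it simply cites \cite[Corollaries 5.3 and 5.6]{gl}. Your outline correctly identifies the relevant short exact sequence of complexes $0 \to P_\bullet \to \Sigma P_\bullet \to \Delta P_\bullet \to 0$ and the associated long exact sequence, but the ``collapse'' step in both parts is a genuine gap, not a routine induction.

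For part (a), your bound $t_i(V) \leqslant \max\{t_i(\Sigma V),\, t_0(P_{i+1}) - 1\}$ is fine, but the phrase ``choosing $P_\bullet$ so that $t_0(P_j)$ closely tracks $t_j(V)$'' hides the problem: one cannot in general arrange $t_0(P_j) = t_j(V)$. The minimal choice gives only $t_0(P_{j}) \leqslant \max\{t_0(V), t_1(V), \ldots, t_j(V)\}$, and feeding this back into your inequality yields the tautology $t_i(V) - i \leqslant \max\{\reg(\Sigma V),\, \reg(V)\}$. No propagation across homological degrees breaks this circularity, because the bound at level $i$ already involves $t_{i+1}(V)$.

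For part (b), the same phenomenon occurs. Combining $t_i(V) \leqslant \max\{t_i(\Sigma V),\, t_{i+1}(\Delta V)\}$ with the symmetric bound $t_i(\Sigma V) \leqslant \max\{t_i(V),\, t_i(\Delta V)\}$ gives only
\[
\reg(V) \leqslant \max\{\reg(V),\, \reg(\Delta V) + 1\},
\]
which is vacuous when $\reg(V) > \reg(\Delta V) + 1$. The auxiliary estimate $\reg(\Sigma \Delta V) \leqslant \reg(\Delta V)$ does not enter anywhere that would break the loop, and you give no indication of what quantity the ``short induction'' is meant to induct on.

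What the cited reference actually supplies is a finer degree-by-degree statement rather than a global regularity inequality deduced from the long exact sequence: one shows, in effect, that $t_i(V) \leqslant t_i(\Sigma V) + 1$ for \emph{each} $i$ (via a natural surjection $\HH^{\OI}_i(\Sigma V) \twoheadrightarrow \Sigma \HH^{\OI}_i(V)$), and analogously for the $\Delta$-case. Your long exact sequence is consistent with this, but does not by itself produce it; the surjectivity of that comparison map is precisely the missing ingredient.
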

\begin{proof}
See \cite[Corollary 5.3]{gl} and \cite[Corollary 5.6]{gl}.
\end{proof}

\section{A key proposition}

In this section we prove a combinatorial proposition, which plays the central role for the proof of Theorem \ref{shift theorem}. For this purpose we introduce a few notations.

Let $V$ be an $\OI$-module such that $\max\{t_0(V), \, t_1(V)\}<\infty$, $d= t_0(V)$, and let $r$ be any integer $\geqslant \max\{t_0(V), \, t_1(V)\}$. Then there exists a surjective homomorphism $F\to V$ where
\[ F=\bigoplus_{j\in J} M(d_j) \]
for some indexing set $J$ and $d_j\in \N$ such that $d_j \leqslant d$ for every $j\in J$. Let $W$ be the kernel of $F \to V$. Then
\[ t_0(W)\leqslant \max\{ t_0(V), \, t_1(V) \} \leqslant r. \]

Let
\[ I=\{ i\in J \mid d_i = d\}, \quad P=\bigoplus_{i\in I} M(d_i) = \bigoplus_{i\in I} M(d). \]
By Lemma \ref{decomposition lemma}, there is a natural decomposition
\[ \Sigma^r F \cong P \oplus Q \]
where $Q$ is a direct sum of $\OI$-modules of the form $M(m)$ such that $m<d$. Let
\[ \eta: \Sigma^r F \to P \]
be the projection with kernel $Q$, and
\[ \widehat{W} = \eta(\Sigma^r W) \subset P. \]

For a morphism $\alpha: [d] \to [s]$ in $\OI$, we let $\ell = \alpha(1)$, and define
\[ \widehat{\alpha} : [d] \to [s-\ell+1] , \qquad \widehat{\alpha}(h) = \alpha(h)-\ell+1 \mbox{ for } h=1,\ldots, d. \]
It is always true that $\widehat{\alpha}(1)=1$. Since every element in $F_s = \bigoplus_{j\in J} M(d_j)_s$ is a linear combination of morphisms starting at a certain object $[d_j]$ with $j \in J$ and ending at the object $[s]$, for $w \in W_s \subset F_s$, we can write
\begin{equation} \label{element of W}
w= \sum_{j\in J} \sum_{\alpha_j:[d_j]\to [s]} c_{j,\alpha} \alpha_j,
\end{equation}
where $c_{j,\alpha}$ are coefficients in $\kk$ and $\alpha_j$ runs over all morphisms from $[d_j]$ to $[s]$. If $j \in I$, then $d_j = d$ by definition, so in this case we simply write $\alpha$ and $\widehat{\alpha}$ rather than $\alpha_j$ and $\widehat{\alpha}_j$ in the above expression. For each $\ell = 1,\ldots, s-d+1$, we let
\begin{equation} \label{generators of W hat}
\widehat{w}_\ell = \sum_{i\in I}\sum_{\substack{\alpha:[d]\to [s] \\ \alpha(1)=\ell}} c_{i,\alpha} \widehat{\alpha} \in P_{s-\ell+1}.
\end{equation}
In particular, if $d>s$, then $\widehat{w}_\ell = 0$.

\begin{proposition} \label{key proposition}
The $\OI$-module $\widehat{W}$ is generated by the collection of elements $\widehat{w}_\ell$ for all $w\in W_s$ such that $s\leqslant t_0(W)$ and for all $\ell=1,\ldots, s-d+1$.
\end{proposition}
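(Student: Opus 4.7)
The plan is to leverage two facts: (i) an explicit description of the projection $\eta: \Sigma^r F \to P$ obtained by unwinding Lemma \ref{decomposition lemma}, and (ii) the generation of $W$ in degrees at most $t_0(W)$. First I would unpack $\eta$: by Lemma \ref{decomposition lemma}, for each $j \in J$, $\Sigma^r M(d_j)$ decomposes into summands indexed by subsets $E \subset [r]$, and the summands contributing to $P$ are precisely those with $j \in I$ (so $d_j = d$) and $E = \emptyset$. The embedding $\theta_\emptyset$ identifies this summand with the subspace of $M(d)_{n+r}$ spanned by morphisms $\alpha: [d] \to [n+r]$ with $\alpha(1) > r$, sending such $\alpha$ to the morphism $\widetilde{\alpha}: [d] \to [n]$ defined by $\widetilde{\alpha}(h) := \alpha(h) - r$. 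Consequently, for any $w' = \sum_{j, \alpha_j} c_{j, \alpha} \alpha_j \in F_{n+r} = (\Sigma^r F)_n$,
\[ \eta(w') = \sum_{i \in I} \sum_{\substack{\alpha: [d] \to [n+r] \\ \alpha(1) > r}} c_{i, \alpha}\, \widetilde{\alpha} \ \in\ P_n. \]

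Since $W$ is generated in degrees $\leq t_0(W)$, every element of $W_{n+r}$ is a $\kk$-linear combination of elements $\beta \cdot w$ with $w \in W_s$, $s \leq t_0(W)$, and $\beta: [s] \to [n+r]$. So it suffices to show each $\eta(\beta w)$ is an $\OI$-linear combination of the $\widehat{w}_\ell$'s. For $\ell \in \{1, \ldots, s - d + 1\}$, I would define $\gamma_\ell: [s - \ell + 1] \to [n]$ by $\gamma_\ell(h) = \beta(h + \ell - 1) - r$; this is a well-defined increasing map precisely when $\beta(\ell) > r$. A direct check yields $\gamma_\ell \widehat{\alpha} = \widetilde{\beta \alpha}$ for every $\alpha: [d] \to [s]$ with $\alpha(1) = \ell$, hence $\gamma_\ell \cdot \widehat{w}_\ell = \sum_{i \in I,\, \alpha(1) = \ell} c_{i, \alpha}\, \widetilde{\beta \alpha}$. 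Grouping the formula for $\eta(\beta w)$ by $\ell = \alpha(1)$---and noting that $\beta \alpha(1) = \beta(\ell) > r$ is precisely the condition for $\gamma_\ell$ to be defined---we obtain
\[ \eta(\beta w) = \sum_{\substack{1 \leq \ell \leq s - d + 1 \\ \beta(\ell) > r}} \gamma_\ell \cdot \widehat{w}_\ell, \]
proving the containment of $\widehat{W}$ in the submodule generated by the $\widehat{w}_\ell$'s. For the reverse inclusion I would show $\widehat{w}_\ell \in \widehat{W}$ by a downward induction on $\ell$, applying the same identity with a $\beta'$ chosen to make $\gamma_\ell$ the identity while forcing $\gamma_{\ell'}$ to be undefined for $\ell' < \ell$.

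The main obstacle is the bookkeeping in the first step: identifying which summand of Lemma \ref{decomposition lemma} lands in $P$ versus $Q$, and writing $\eta$ concretely in terms of morphisms. A subtle point is correctly handling the range of $\ell$---terms with $\alpha(1) = \ell$ only exist when $\ell \leq s - d + 1$, since one also has $\alpha(d) \geq \ell + d - 1$ and $\alpha(d) \leq s$---and the degenerate case $t_0(W) < d$, where the family of generators is empty and $\widehat{W} = 0$ for trivial reasons (no $i \in I$ can contribute to any generator of $W$). Once these are sorted out, the verification of $\gamma_\ell \widehat{\alpha} = \widetilde{\beta \alpha}$ is a direct computation, and the formula for $\eta(\beta w)$ follows by collecting terms.
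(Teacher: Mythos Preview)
Your proposal is correct and follows essentially the same approach as the paper: the explicit description of $\eta$, the map $\gamma_\ell$ (the paper's $\gamma$), the identity $\gamma_\ell\widehat{\alpha}=\eta(\beta\alpha)$, and the downward induction on $\ell$ to show $\widehat{w}_\ell\in\widehat{W}$ all match the paper's argument, only with the two halves presented in the opposite order. Your sketch of the downward induction (choose $\beta'$ so that $\gamma_\ell=\mathrm{id}$ and $\gamma_{\ell'}$ is undefined for $\ell'<\ell$) is exactly the paper's choice $\beta'(h)=h+r-\ell+1$, which then yields $\eta(\beta' w)=\widehat{w}_\ell+\iota\widehat{w}_{\ell+1}+\cdots$ and the induction hypothesis handles the tail.
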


\begin{proof}
Take any $w\in W_s$ where $s\leqslant t_0(W)$ and write it in the form of \eqref{element of W}. For any $n\in \N$, we consider an increasing map $\beta: [s] \to [r+n]$. Then
\[ \beta w = \sum_{j\in J} \sum_{\alpha_j: [d_j]\to[s]}  c_{j,\alpha_j} \beta\alpha_j \in F_{r+n}. \]
Note that:
\begin{itemize}

\item
If $j\notin I$, then $\beta\alpha_j\in Q_n \subset (\Sigma^r F)_n = F_{r+n}$, so $\eta(\beta\alpha_j)=0$.

\item
If $j \in I$ and $\beta \alpha_j (1) \leqslant r$, then $\beta \alpha_j \in Q_n \subset (\Sigma^r F)_n = F_{r+n}$, so $\eta(\beta\alpha_j)=0$.

\end{itemize}
Therefore
\[ \eta(\beta w) = \sum_{i\in I} \sum_{\substack{\alpha: [d]\to [s] \\ \beta\alpha(1)> r }} c_{i,\alpha} \eta(\beta \alpha) \in P_n. \]

To show that $\widehat{w}_\ell \in \widehat{W}$ for each $\ell=1,\ldots, s-d+1$, we do a downward-induction on $\ell$. Let $\beta: [s] \to [s+r-\ell+1]$ be the map defined by
\[ \beta(h) = h+r-\ell+1  \quad \mbox{ for each } h. \]
So $\beta$ will map $P_s$ to $P_{s+r-\ell+1}$. Note that for any $\alpha:[d]\to [s]$, we have:
\[
\beta(\alpha(1)) = \alpha(1)+r-\ell + 1   \left\{ \begin{array}{ll}
\leqslant r & \mbox{ if } \alpha(1)<\ell,\\
= r+1 & \mbox{ if } \alpha(1)=\ell,\\
>r+1 &\mbox{ if } \alpha(1)>\ell.
\end{array}
\right.
\]
So
\begin{align*}
\eta(\beta w) &= \sum_{i\in I} \sum_{\substack{\alpha: [d]\to [s] \\ \alpha(1)\geqslant \ell }} c_{i,\alpha} \eta(\beta \alpha)    \\
&= \widehat{w}_\ell + \iota \widehat{w}_{\ell+1} + \iota^2 (\widehat{w}_{\ell+2}) + \cdots.
\end{align*}
By the downward-induction hypothesis, $\iota \widehat{w}_{\ell+1}, \iota^2 (\widehat{w}_{\ell+2}), \ldots$ are in $\widehat{W}$, so it follows that $\widehat{w}_\ell$ is also in $\widehat{W}$.

It remains to show that for \emph{any} increasing map $\beta: [s] \to [r+n]$, the element $\eta(\beta w)$ is contained in the submodule of $P$ generated by the collection of $\widehat{w}_\ell$. Note that
\begin{align*}
\eta(\beta w) &= \sum_{i\in I} \sum_{\substack{\alpha: [d]\to [s] \\ \beta\alpha(1)> r }} c_{i,\alpha} \eta(\beta \alpha) \\
&= \sum_{ \{\ell \,\mid \, \beta(\ell)>r\} } \sum_{i \in I } \sum_{\substack{\alpha: [d]\to [s] \\ \alpha(1)=\ell}} c_{i,\alpha} \eta(\beta \alpha) \in P_n.
\end{align*}
It suffices to check that for each $\ell$ such that $\beta(\ell)>r$, the element
\[ \sum_{i \in I } \sum_{\substack{\alpha: [d]\to [s] \\ \alpha(1)=\ell}} c_{i,\alpha} \eta(\beta \alpha) \]
is of the form $\gamma \widehat{w}_\ell$ for some $\gamma: [s-\ell+1]\to [n] $. To this end, define the map
\[ \gamma : [s-\ell+1] \to [n] \]
by
\[ \gamma(h) = \beta(h+\ell-1)-r \quad \mbox{ for each } h=1,\ldots, s-\ell+1. \]
Then for each $h=1,\ldots, d$, we have:
\[ \gamma \widehat{\alpha} (h)= \gamma(\alpha(h)-\ell+1) = \beta \alpha(h) -r; \]
this implies $\gamma \widehat{\alpha} = \eta(\beta \alpha)$. Hence,
\[ \gamma \widehat{w}_\ell = \sum_{i\ \in I} \sum_{\substack{\alpha: [d] \to [s] \\ \alpha(1)=\ell }} c_{i,\alpha} \gamma \widehat{\alpha}
= \sum_{i \in I } \sum_{\substack{\alpha: [d]\to [s] \\ \alpha(1)=\ell}} c_{i,\alpha} \eta(\beta \alpha),\]
as claimed.
\end{proof}

\section{Proofs of Main Theorems}

In this section we keep the notation of the preceding, and prove the main theorems stated in the introduction.

\subsection{A proof of Theorem \ref{shift theorem}}

In this subsection we prove the following theorem.

\begin{theorem}
Let $V$ be an $\OI$-module presented in finite degrees, $d= t_0(V)$, and $r$ be any integer such that $r \geqslant \prd(V)$. Define
\[ \overline{V} = \frac{\Sigma^r V}{(\Sigma^r V)_{\prec d}}. \]
Then $\kappa \overline{V} = 0$.
\end{theorem}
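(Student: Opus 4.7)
The strategy is to identify $\overline{V}$ with a concrete quotient $P/\widehat{W}$ of the module $P$ appearing in Section~3, and then exploit the ``first coordinate equals $1$'' feature of the generators furnished by Proposition \ref{key proposition}.

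First I would reuse the setup preceding Proposition \ref{key proposition}: fix a surjection $F = \bigoplus_{j \in J} M(d_j) \twoheadrightarrow V$ with $d_j \leqslant d$, let $W$ be its kernel (so $t_0(W) \leqslant \prd(V) \leqslant r$), and split $\Sigma^r F = P \oplus Q$ via Lemma \ref{decomposition lemma}, with $Q$ a sum of modules $M(m)$ with $m < d$. Since $M(d)_k = 0$ for $k < d$, one has $(\Sigma^r F)_{\prec d} = Q$; hence $(\Sigma^r V)_{\prec d}$ is the image of $Q$ in $\Sigma^r V$, and a short diagram chase produces $\overline{V} \cong (P \oplus Q)/(Q + \Sigma^r W) \cong P/\widehat{W}$, where $\widehat{W} = \eta(\Sigma^r W)$. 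This reduces the claim $\kappa \overline{V} = 0$ to the statement that $v \in P_n$ with $\iota v \in \widehat{W}_{n+1}$ forces $v \in \widehat{W}_n$.

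Next, decompose $P_n = \bigoplus_{k \geqslant 1} P_n^{(k)}$, where $P_n^{(k)}$ is the $\kk$-span of those basis elements $(i, \alpha)$ with $\alpha(1) = k$. By Proposition \ref{key proposition}, every generator $\widehat{w}_\ell$ lies in $P^{(1)}$, and for any morphism $\gamma$ the element $\gamma \widehat{w}_\ell$ lies in $P^{(\gamma(1))}$; consequently the decomposition restricts to $\widehat{W}_n = \bigoplus_{k \geqslant 1} \widehat{W}_n^{(k)}$ with $\widehat{W}_n^{(k)} := \widehat{W}_n \cap P_n^{(k)}$. Moreover, $\iota$ is $\kk$-linearly injective and carries $P_n^{(k)}$ into $P_{n+1}^{(k+1)}$.

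The central claim, which I expect to be the main obstacle, is the identity
\[ \widehat{W}_{n+1}^{(k+1)} \;=\; \iota\bigl(\widehat{W}_n^{(k)}\bigr) \qquad \text{for every } k \geqslant 1. \]
The inclusion $\supseteq$ is immediate since $\widehat{W}$ is an $\OI$-submodule; for $\subseteq$, any generator $\gamma \widehat{w}_\ell \in P_{n+1}^{(k+1)}$ has $\gamma(1) = k+1 \geqslant 2$, so it factors uniquely as $\gamma = \iota \gamma'$ with $\gamma'(1) = k$, giving $\gamma \widehat{w}_\ell = \iota(\gamma' \widehat{w}_\ell)$ and $\gamma' \widehat{w}_\ell \in \widehat{W}_n^{(k)}$. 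The essential input is the property $\widehat{\alpha}(1) = 1$ of the generators from Proposition \ref{key proposition}: without it, one could not pull elements of $\widehat{W}_{n+1}^{(k+1)}$ back through $\iota$. Granting the identity, the conclusion follows at once: writing $v = \sum_k v^{(k)}$, the hypothesis $\iota v \in \widehat{W}_{n+1}$ projects componentwise to $\iota v^{(k)} \in \widehat{W}_{n+1}^{(k+1)} = \iota(\widehat{W}_n^{(k)})$, and injectivity of $\iota$ forces $v^{(k)} \in \widehat{W}_n^{(k)}$ for every $k$, so $v \in \widehat{W}_n$, as required.
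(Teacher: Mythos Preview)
Your proof is correct and follows essentially the same route as the paper's: both reduce to $\overline{V}\cong P/\widehat{W}$, invoke Proposition~\ref{key proposition} to get generators $\widehat{w}_\ell$ with first coordinate $1$, and then factor $\gamma=\iota\gamma'$ whenever $\gamma(1)\geqslant 2$ to pull elements back through $\iota$. The only difference is packaging: you make the first-coordinate grading $P_n=\bigoplus_k P_n^{(k)}$ explicit and prove the identity $\widehat{W}_{n+1}^{(k+1)}=\iota(\widehat{W}_n^{(k)})$, whereas the paper carries out the same splitting and factorisation directly on a single expression $\iota v=\sum a_k\beta_k\omega_k$.
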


\begin{proof}
Since the functor $\Sigma^r$ is exact, we have the short exact sequence
\[  0 \to \Sigma^r W \to \Sigma^r F \to \Sigma^r V \to 0. \]
Recall the natural decomposition $\Sigma^r F \cong P \oplus Q$. The restriction of the map $\Sigma^r F \to \Sigma^r V$ to $Q$ gives a surjective homomorphism $Q \to (\Sigma^r V)_{\prec d}$. We get the following commuting diagram whose rows and columns are exact:
\[
\xymatrix{
0 \ar[r] & Q \ar[r] \ar[d] & \Sigma^r F \ar[r]^-{\eta} \ar[d] & P \ar[r] \ar[d] & 0 \\
0 \ar[r] & (\Sigma^r V)_{\prec d} \ar[r] \ar[d] & \Sigma^r V \ar[r] \ar[d] & \overline{V} \ar[r] \ar[d] & 0 \\
& 0 & 0 & 0 &
}
\]
By the snake lemma, the kernel of $P\rightarrow \overline{V}$ is $\widehat{W}$. Therefore $\overline{V}$ is isomorphic to $P / \widehat{W}$.

We now prove that $\kappa \overline{V} = 0$. Suppose $v\in P_n$ for some $n\in \N$ and $\iota v \in \widehat{W}_{n+1}$. We need to show that $v\in \widehat{W}_n$.

By Proposition \ref{key proposition}, we can write
\[ \iota v = \sum_{k=1}^p  a_k \beta_k \omega_k \in \widehat{W}_{n+1} \subset P_{n+1} \]
where $a_k$ are coefficients in $\kk$, each $\omega_k \in \widehat{W}_{s_k}$ is an element of the form:
\[ \sum_{i\in I} \sum_{\substack{\alpha : [d] \to [s_k]\\ \alpha(1)=1}}  c_{i,\alpha} \alpha  \in P_{s_k} \quad \mbox{ for some coefficients } c_{i,\alpha} \in \kk, \]
and $\beta_k : [s_k] \to [n+1]$. We write this as:
\[ \iota v = \sum_{ \{ k \,\mid \, \beta_k(1)=1 \} }  a_k \beta_k \omega_k + \sum_{ \{ k \, \mid \, \beta_k(1)>1 \} }  a_k \beta_k \omega_k. \]
Since $\iota(1)>1$, we must have:
\[ \iota v = \sum_{ \{ k \, \mid \, \beta_k(1)>1 \} }  a_k \beta_k \omega_k. \]
Now for each $\beta_k$ such that $\beta_k(1)>1$, define $\beta'_k: [s_k] \to [n]$ by $\beta'_k(h) = \beta_k(h)-1$ for each $h$, so that $\iota\beta'_k =\beta_k$.
Then
\[ \iota v = \iota\left( \sum_{ \{ k \, \mid \, \beta_k(1)>1 \} }  a_k \beta'_k \omega_k. \right). \]
But $\iota: P_n\to P_{n+1}$ is injective, so
\[ v = \sum_{ \{ k \, \mid \, \beta_k(1)>1 \} }  a_k \beta'_k \omega_k \in \widehat{W}_n. \]
This concludes the proof of Theorem \ref{shift theorem}.
\end{proof}

The importance of this theorem lies in the following fact. Inductive methods such as the one described in \cite{gl} have played a prominent role in representation stability theory. To apply these methods, in general the first step is to convert an arbitrary module $V$ into a closely related torsion free module $\overline{V}$ such that $\kappa \overline{V} = 0$. For finitely generated $\OI$-modules over commutative Noetherian rings, the authors have described a finite procedure to get such a $\overline{V}$ in \cite{gl}. However, the finiteness of this procedure highly depends on the Noetherian property of finitely generated modules, and hence it can not extend to the more general framework of arbitrary $\OI$-modules $V$ presented in finite degrees. The above theorem provides a redemption for this failure. To avoid confusion, we remind the reader that the module $V_{\mathrm{reg}}$ defined in \cite[Section 3]{gl} does not coincide with $\overline{V}$ in this paper. Here is an example:

\begin{example} \normalfont
Let $V = M(1) \oplus M(0)$. Then $t_0(V) = 1$ and $t_1(V) = -1$. For any $r \geqslant 1$,
\begin{equation*}
\Sigma^r V = M(1) \oplus M(0)^{\oplus r+1} = V \oplus M(0)^{\oplus r}.
\end{equation*}
Consequently,
\begin{equation*}
\overline{V} = \frac{\Sigma^r V}{(\Sigma^r V)_{\prec 1}} = \frac{M(1) \oplus M(0)^{\oplus r+1}} {M(0)^{\oplus r+1}} = M(1).
\end{equation*}
However, one has
\begin{equation*}
V_{\mathrm{reg}} = V = M(1) \oplus M(0).
\end{equation*}
\end{example}

For an $\FI$-module $V$ presented finite degrees, after applying the shift functor $\Sigma$ (for $\FI$-modules) $r$ times with $r \geqslant t_0(V) + t_1(V)$, one gets a semi-induced module, and in particular $\kappa \Sigma^r V = 0$; see for instances \cite[Theorem 2.6]{ns}, \cite[Corollary 3.3]{li}, or \cite[Theorem C]{ramos}. However, for $\OI$-modules, this result does not hold. The following example is provided by Eric Ramos:

\begin{example} \label{example}
Let $V$ be the following $\OI$-module: $V_0 = 0$, and $V_n = \kk$ for $n \geqslant 1$. For a morphism $\alpha: [m] \to [n]$ with $m \geqslant 1$, one defines $V(\alpha): V_m \to V_n$ to be the identity map if $\alpha(m) = n$ and $V(\alpha)$ is the zero map otherwise. The reader can check that $V$ is indeed an $\OI$-module with $t_0(V) = 1$ and $t_1(V) = 2$. A direct computation shows $\Sigma^r V = V \oplus U$ for any $r \geqslant 1$, where $U_0 = \kk$ and $U_n = 0$ for all $n \geqslant 1$, and $\overline{V} \cong V$. Therefore, $\Sigma^r V$ is not a semi-induced module for any $r \in \N$. But the natural map $\overline{V} \to \Sigma \overline{V} \cong U \oplus V$ is the obvious embedding map, and hence $\kappa \overline{V} = 0$.
\end{example}

\subsection{An upper bound of regularity}

In this subsection we use Theorem \ref{shift theorem} and an inductive method to establish Theorem \ref{regularity theorem}. This method is base on the following two short exact sequences:
\begin{align*}
& 0 \to (\Sigma^rV)_{\prec d} \to \Sigma^r V \to \overline{V} \to 0,\\
& 0 \to \overline{V} \to \Sigma \overline{V} \to \Delta \overline{V} \to 0.
\end{align*}

\begin{lemma} \label{inductive inequalities}
Let $r$ be an integer $\geqslant \max \{ t_0(V), \, t_1(V) \}$. We have:
\[ t_0(\Delta\overline{V}) \leqslant d-1 \mbox{ and } t_1(\Delta\overline{V}) \leqslant r-1.\]
\end{lemma}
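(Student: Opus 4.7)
The plan rests on the explicit description $\overline{V} \cong P/\widehat{W}$ extracted from the proof of Theorem~\ref{shift theorem}, where $P = \bigoplus_{i \in I} M(d)$. The generation-degree bound is essentially formal: since $\overline{V}$ is a quotient of $P$, we have $t_0(\overline{V}) \leqslant d$, and the inequality $t_0(\Delta V') \leqslant t_0(V') - 1$ for nonzero $V'$ (recorded just after Lemma~\ref{decomposition lemma}) immediately gives $t_0(\Delta \overline{V}) \leqslant d - 1$.

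For the relation-degree bound, I would first analyze $\Sigma P$ via Lemma~\ref{decomposition lemma}. With $r = 1$ and $m = d$ only the indices $\ell = 0$ and $\ell = 1$ appear, producing $\Sigma P \cong P \oplus P'$ with $P' = \bigoplus_{i \in I} M(d-1)$. A direct check of $\theta_\emptyset$ shows that the natural map $P \to \Sigma P$ is exactly the inclusion into the first summand; hence $\kappa P = 0$ and $\Delta P \cong P'$. Because $\widehat{W}$ embeds into $P$ and $P \to \Sigma P$ is injective, the composite $\widehat{W} \to \Sigma \widehat{W} \to \Sigma P$ factors the injection $\widehat{W} \hookrightarrow P \to \Sigma P$, forcing $\kappa \widehat{W} = 0$ as well. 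Combined with $\kappa \overline{V} = 0$ from Theorem~\ref{shift theorem}, the snake lemma applied to the natural transformation from the identity to $\Sigma$ on the short exact sequence $0 \to \widehat{W} \to P \to \overline{V} \to 0$ collapses to
$$0 \to \Delta \widehat{W} \to \Delta P \to \Delta \overline{V} \to 0.$$

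Since $\Delta P \cong P'$ is projective, the long exact sequence in $\h^{\OI}$-homology yields $t_1(\Delta \overline{V}) \leqslant t_0(\Delta \widehat{W})$. Combining $t_0(\Delta \widehat{W}) \leqslant t_0(\widehat{W}) - 1$ with the estimate $t_0(\widehat{W}) \leqslant t_0(W) \leqslant r$ supplied by Proposition~\ref{key proposition} (whose generators $\widehat{w}_\ell$ live in $P_{s-\ell+1}$ with $s \leqslant t_0(W)$) then yields $t_1(\Delta \overline{V}) \leqslant r - 1$. The main obstacle is the triple bookkeeping in the snake lemma: one must verify simultaneously that $\kappa$ vanishes on each of $\widehat{W}$, $P$, and $\overline{V}$, so that the connecting maps disappear and the six-term sequence truncates to the desired short exact sequence. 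Once that is secured, the degree bounds follow almost mechanically from Proposition~\ref{key proposition} and the standard degree-shift inequality for $\Delta$.
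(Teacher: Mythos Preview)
Your argument is correct and follows the same core strategy as the paper: identify $\overline{V} \cong P/\widehat{W}$, apply $\Delta$, use that $\Delta P$ is projective, and bound $t_1(\Delta\overline{V})$ by $t_0(\Delta\widehat{W}) \leqslant t_0(\widehat{W}) - 1 \leqslant r - 1$. The paper's version is slightly more economical in two places: it uses only the right-exactness of $\Delta$ (obtaining a sequence $0 \to C \to \Delta P \to \Delta\overline{V} \to 0$ with $C$ merely a \emph{quotient} of $\Delta\widehat{W}$), so it never needs $\kappa\overline{V} = 0$ or your verifications that $\kappa P = \kappa\widehat{W} = 0$; and it bounds $t_0(\widehat{W})$ by observing that $\widehat{W}$ is a quotient of $\Sigma^r W$ (via the snake lemma on the diagram in the proof of Theorem~\ref{shift theorem}) rather than via the explicit generators of Proposition~\ref{key proposition}.
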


\begin{proof}
To check the first inequality, one looks at the first short exact sequence and note that
\[t_0(\overline{V}) \leqslant t_0(\Sigma^r V) \leqslant t_0(V) = d,\]
so the conclusion holds; see the paragraph before Lemma \ref{reg}.

Now we turn to the second inequality. Applying the snake Lemma to the commutative diagram in the proof of Theorem \ref{shift theorem} we get a short exact sequence
\[
0 \to U \to \Sigma^r W \to \widehat{W} \to 0.
\]
Consequently, one gets
\[t_0(\widehat{W}) \leqslant t_0(\Sigma^r W)  \leqslant t_0(W) \leqslant r.\]
Furthermore, since $\Delta$ is right exact, applying it to the short exact sequence
\[ 0 \to \widehat{W} \to P \to \overline{V} \to 0  \]
we obtain another short exact sequence
\[ 0 \to C \to \Delta P \to \Delta \overline{V} \to 0 \]
where $C$ is a quotient module of $\Delta \widehat{W}$. Consequently, one has
\[ t_1(\Delta\overline{V}) \leqslant t_0(C) \leqslant t_0(\Delta \widehat{W}) \leqslant t_0(\widehat{W}) - 1 \leqslant r-1\]
as claimed.
\end{proof}

To prove the upper bound for regularity of $\OI$-modules presented in finite degrees, for each $d \in \mathbb{N}$ we introduce an auxiliary function $C_d: \mathbb{Z} \to \mathbb{Z}$ by the initial condition $C_0(r)=r$ and the recursive relation
\[ C_d(r) = C_{d-1}\left(C_{d-1}(r-1)+3\right) + r. \]
These functions are increasing with respect to $d$ and $r$, and have simple lower and upper bounds.

\begin{lemma} \label{increasing}
Suppose $r\geqslant d\geqslant 0$.
\begin{enumerate}[(a)]
\item
One has:
\begin{gather*}
C_d(r)\geqslant r,\\
C_d(r+1) > C_d(r).
\end{gather*}

\item
If $d\geqslant 1$, then
\[ C_d(r) > C_{d-1}(r). \]

\item Suppose $r\geqslant d\geqslant 0$. Then one has:
\[ C_d(r) \leqslant 2^{2^d} r. \]
\end{enumerate}
\end{lemma}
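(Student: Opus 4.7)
The plan is to prove all three parts by induction on $d$, using the recursion $C_d(r) = C_{d-1}(C_{d-1}(r-1) + 3) + r$ together with the base $C_0(r) = r$. For (a), the case $d = 0$ is immediate, and the inductive step applies the hypothesis $C_{d-1}(s) \geqslant s$ twice: first to get $C_{d-1}(r-1) + 3 \geqslant r + 2$, then to get $C_{d-1}(C_{d-1}(r-1) + 3) \geqslant r + 2$, so that $C_d(r) \geqslant r$. Strict monotonicity $C_d(r+1) > C_d(r)$ propagates through the two nested applications of the strictly monotonic $C_{d-1}$. For (b), part (a) gives $C_{d-1}(r-1) + 3 \geqslant r + 2 > r$; strict monotonicity of $C_{d-1}$ then yields $C_{d-1}(C_{d-1}(r-1) + 3) > C_{d-1}(r)$, and combining with $r \geqslant d \geqslant 1$ forces $C_d(r) > C_{d-1}(r)$.

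The real work is in (c). A naive induction substituting $C_{d-1}(s) \leqslant a_{d-1} s$ (with $a_d := 2^{2^d}$) twice inside the recursion yields
\[
C_d(r) \leqslant a_d(r-1) + 3 a_{d-1} + r,
\]
which already exceeds $a_d r$ at $(d, r) = (1, 1)$. To avoid this waste I will first observe that the recursion preserves linearity in $r$, so an easy induction on $d$ gives $C_d(r) = \alpha_d r + \beta_d$ with
\[
\alpha_d = \alpha_{d-1}^2 + 1, \qquad \beta_d = (\alpha_{d-1} + 1) \beta_{d-1} + \alpha_{d-1}(3 - \alpha_{d-1}),
\]
starting from $\alpha_0 = 1$, $\beta_0 = 0$. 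I will then bound slope and intercept separately by induction, proving $\alpha_d \leqslant a_d / 2$ (immediate, since $a_d/4 + 1 \leqslant a_d/2$ whenever $a_d \geqslant 4$) and $\beta_d \leqslant (a_d / 2) d$. These combine to give, for $r \geqslant d$,
\[
C_d(r) = \alpha_d r + \beta_d \leqslant (a_d/2)(r + d) \leqslant a_d r.
\]

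The main obstacle will be the bound on $\beta_d$. The sign of the correction term $\alpha_{d-1}(3 - \alpha_{d-1})$ is positive for $d - 1 \in \{0, 1\}$ and non-positive for $d - 1 \geqslant 2$, so the cases $d = 0, 1, 2$ must be checked as explicit base cases---the equality $\beta_1 = 2 = (a_1/2) \cdot 1$ being particularly tight. For $d \geqslant 3$ one discards the non-positive correction term and reduces the inductive step to the trivial inequality $a_{d-1} \geqslant 2(d-1)/(d+1)$, after which the induction closes.
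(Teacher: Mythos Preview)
Your arguments for (a) and (b) are correct and essentially match the paper's proof. For (c), your proof is also correct but follows a genuinely different route. The paper does not exploit the linearity of $C_d$ in $r$; instead it verifies the cases $d=0,1,2$ by hand (computing $C_1(r)=2r+2$ and $C_2(r)=5r+8\leqslant 15r$), then for $d\geqslant 2$ proves the slightly strengthened bound $C_d(r)\leqslant (2^{2^d}-1)r$ by direct induction, substituting this hypothesis twice into the recursion and simplifying the resulting expression in one block of algebra. Your decomposition $C_d(r)=\alpha_d r+\beta_d$, with separate inductions giving $\alpha_d\leqslant a_d/2$ and $\beta_d\leqslant (a_d/2)d$, is more structural: it isolates where the tension actually lies (the intercept, not the slope) and the intermediate bounds you obtain are in fact slightly sharper than the paper's for large $d$. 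The price is more bookkeeping, two recursions to manage, and an extra explicit base case at $d=2$; the paper's single strengthened hypothesis makes the inductive step close in a short chain of inequalities. Both arguments must treat the tight case $d=1$ (where $C_1(1)=4=2^{2^1}\cdot 1$) by direct computation rather than by the general induction.
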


\begin{proof}
(a) We use induction on $d$. When $d=0$, the inequalities are obvious. Suppose $d\geqslant 1$. Then
\[ C_{d-1}(r-1) \geqslant r-1. \]
So we have:
\begin{align*}
C_d(r) &= C_{d-1}\left(C_{d-1}(r-1)+3\right) + r \\
&\geqslant C_{d-1}((r-1) + 3) + r \\
&\geqslant r-1 + 3 + r \\
&\geqslant r,
\end{align*}
and
\begin{align*}
C_d(r+1) &= C_{d-1}\left(C_{d-1}(r)+3\right) + r + 1 \\
&> C_{d-1}\left(C_{d-1}(r-1)+3\right) + r \\
&= C_d(r).
\end{align*}

(b) Using (a), we have:
\begin{align*}
C_d(r) &= C_{d-1}\left(C_{d-1}(r-1)+3\right) + r \\
&\geqslant C_{d-1} (r-1+3) + r\\
&> C_{d-1}(r).
\end{align*}

(c) Note that $C_0(r)=r$ and $C_1(r)=2r+2$. Let us prove that:
\[ C_d(r) \leqslant (2^{2^d}-1) r \quad \mbox{ for } r\geqslant d\geqslant 2. \]
We have:
\begin{align*}
C_2(r) &= C_1(C_1(r-1)+3) + r \\
&= 2( 2(r-1)+2 + 3  ) + 2 + r \\
&= 5r + 8 \\
&\leqslant 15 r.
\end{align*}
We use induction for $d>2$. By the induction hypothesis and the conclusion of Parts (a) and (b), we have:
\begin{align*}
C_d(r) &\leqslant (2^{2^{d-1}} - 1) ( ( 2^{2^{d-1}} -1 )(r-1) + 3) + r    \\
& = (2^{2^{d-1}} - 1) ( 2^{2^{d-1}}r - r - 2^{2^{d-1}} + 4 ) + r \\
& \leqslant (2^{2^{d-1}}) (2^{2^{d-1}}r - r ) + r \\
& = 2^{2^{d}} r - 2^{2^{d-1}} r + r \\
& \leqslant (2^{2^d} - 1 )r.
\end{align*}
\end{proof}

Now we are ready to prove Theorem \ref{regularity theorem} by an induction on $d = t_0(V)$,

\begin{theorem}
For any nonzero $\OI$-module $V$, one has:
\[ \reg(V) \leqslant   2^{2^{t_0(V)}} \prd(V). \]
\end{theorem}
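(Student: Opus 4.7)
The plan is to prove the sharper statement $\reg(V)\leqslant C_d(r)$, where $d=t_0(V)$ and $r=\prd(V)$, by induction on $d$; the theorem then follows from Lemma~\ref{increasing}(c), since $r\geqslant d$ holds automatically. We may assume $V$ is presented in finite degrees, for otherwise the asserted bound is vacuous; with the convention $\reg(0)=-1$, the inductive hypothesis extends vacuously to zero modules.

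For the base case $d=0$, note that $(\Sigma^rV)_{\prec 0}=0$, so $\overline V=\Sigma^rV$. Lemma~\ref{inductive inequalities} gives $t_0(\Delta\overline V)\leqslant -1$, i.e.\ $\Delta\overline V=0$; combined with $\kappa\overline V=0$ from Theorem~\ref{shift theorem}, the second inequality of Lemma~\ref{reg} yields $\reg(\overline V)\leqslant\reg(\Delta\overline V)+1=0$. Iterating the first inequality of Lemma~\ref{reg} then produces $\reg(V)\leqslant\reg(\Sigma^rV)+r=\reg(\overline V)+r\leqslant r=C_0(r)$.

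For the inductive step $d\geqslant 1$, I would exploit the two short exact sequences
\begin{align*}
& 0\to(\Sigma^rV)_{\prec d}\to\Sigma^rV\to\overline V\to 0,\\
& 0\to\overline V\to\Sigma\overline V\to\Delta\overline V\to 0,
\end{align*}
where the second relies on $\kappa\overline V=0$ from Theorem~\ref{shift theorem}. By Lemma~\ref{inductive inequalities}, $\prd(\Delta\overline V)\leqslant r-1$ and $t_0(\Delta\overline V)\leqslant d-1$, so the inductive hypothesis (with monotonicity of $C_d(r)$ in both arguments from Lemma~\ref{increasing}) gives $\reg(\Delta\overline V)\leqslant C_{d-1}(r-1)$, whence Lemma~\ref{reg} yields $\reg(\overline V)\leqslant C_{d-1}(r-1)+1$. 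Since $(\Sigma^rV)_{\prec d}$ is generated in degrees strictly below $d$, we have $t_0((\Sigma^rV)_{\prec d})\leqslant d-1$; the homology long exact sequence for the first short exact sequence gives
\[ t_1((\Sigma^rV)_{\prec d})\leqslant\max\{t_2(\overline V),\,t_1(\Sigma^rV)\}. \]
Here $t_2(\overline V)\leqslant\reg(\overline V)+2\leqslant C_{d-1}(r-1)+3$, and the bound $t_1(\Sigma^rV)\leqslant r$ is obtained by applying the exact functor $\Sigma^r$ to a free presentation $G\to F\to V\to 0$ with $t_0(F)\leqslant d$ and $t_0(G)\leqslant r$, then invoking Lemma~\ref{decomposition lemma} to see that $\Sigma^rF$ remains projective while $t_0(\Sigma^rG)\leqslant r$. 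Since $C_{d-1}(r-1)\geqslant r-1$ by Lemma~\ref{increasing}(a), we conclude $\prd((\Sigma^rV)_{\prec d})\leqslant C_{d-1}(r-1)+3$, so the inductive hypothesis delivers $\reg((\Sigma^rV)_{\prec d})\leqslant C_{d-1}(C_{d-1}(r-1)+3)$. The long exact sequence in homology applied to the first short exact sequence also gives $\reg(\Sigma^rV)\leqslant\max\{\reg((\Sigma^rV)_{\prec d}),\,\reg(\overline V)\}\leqslant C_{d-1}(C_{d-1}(r-1)+3)$, and $r$ successive applications of the first inequality of Lemma~\ref{reg} produce $\reg(V)\leqslant C_{d-1}(C_{d-1}(r-1)+3)+r=C_d(r)$, as desired.

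The main obstacle is the bookkeeping: one must carefully track how $t_0$, $t_1$, $\prd$, and $\reg$ propagate through the homology long exact sequences attached to the two short exact sequences, and verify that each estimate slots precisely into the recursion $C_d(r)=C_{d-1}(C_{d-1}(r-1)+3)+r$. In particular, the enigmatic constant $+3$ is forced by the chain $t_2(\overline V)\leqslant\reg(\overline V)+2\leqslant C_{d-1}(r-1)+3$, and the monotonicity statements of Lemma~\ref{increasing} are essential for collapsing the various maxima into the clean recursive bound above.
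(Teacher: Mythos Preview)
Your proof is correct and follows essentially the same route as the paper's: induction on $d=t_0(V)$ to establish $\reg(V)\leqslant C_d(r)$, using Theorem~\ref{shift theorem}, Lemma~\ref{inductive inequalities}, and Lemma~\ref{reg} applied to the two short exact sequences involving $\overline{V}$, then invoking Lemma~\ref{increasing}(c). The only cosmetic difference is that the paper proves the bound for every integer $r\geqslant\prd(V)$ (so that the inductive hypothesis applies directly with the parameter $r-1$), whereas you fix $r=\prd(V)$ and instead appeal to the monotonicity statements in Lemma~\ref{increasing} to absorb the slack; both are valid.
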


\begin{proof}
Let $V$ be a nonzero $\OI$-module. If $\prd(V) = \infty$, the statement of Theorem \ref{regularity theorem} is trivial, so assume that $\prd(V) < \infty$. Let $d=t_0(V)$ and $r$ be an integer such that $r \geqslant \prd(V)$. We use an induction on $d$ to show that
\begin{equation} \label{bounding reg by C}
\reg(V)\leqslant C_d(r).
\end{equation}
This inequality clearly implies the conclusion of the theorem.

Suppose $d=0$.  Then
\[ t_0(\Delta\overline{V}) \leqslant -1 \quad\Rightarrow\quad \Delta\overline{V}=0 \quad\Rightarrow\quad \reg(\Delta\overline{V})=-1. \]
By Theorem \ref{shift theorem} and Lemma \ref{reg}, we have: $\reg(\overline{V})\leqslant 0$. Since $(\Sigma^r V)_{\prec 0}=0$, we have: $\Sigma^r V \cong \overline{V}$, so $\reg(\Sigma^r V) \leqslant 0$. By Lemma \ref{reg}, it follows that $\reg V \leqslant r = C_0(r)$.

Suppose $d\geqslant 1$. We shall use Lemma \ref{increasing} several times below without further mention. By the induction hypothesis, we have:
\[ \reg(\Delta\overline{V}) \leqslant C_{d-1}(r-1). \]
By Theorem \ref{shift theorem} and Lemma \ref{reg}, we have:
\[  \reg(\overline{V}) \leqslant C_{d-1}(r-1) + 1,
\]
so
\[ t_2(\overline{V}) \leqslant C_{d-1}(r-1) + 3. \]
Therefore, from the short exact sequence
\[ 0 \to (\Sigma^rV)_{\prec d} \to \Sigma^r V \to \overline{V} \to 0 \]
we deduce
\begin{align*}
t_1( (\Sigma^r V)_{\prec d} ) &\leqslant \max\{ t_1(\Sigma^r V), \, t_2(\overline{V})\} \\
&\leqslant \max\{ t_0(\Sigma^r W), \, t_2(\overline{V})\} \\
&\leqslant \max\{ r, \, C_{d-1}(r-1) + 3 \} \\
&\leqslant C_{d-1}(r-1) + 3.
\end{align*}
By the induction hypothesis, we have:
\[  \reg( (\Sigma^r V)_{\prec d} ) \leqslant C_{d-1}(C_{d-1}(r-1) + 3).
\]
Hence,
\[ \reg (\Sigma^r V) \leqslant \max\{ \reg( (\Sigma^r V)_{\prec d} ), \, \reg(\overline{V}) \} \leqslant C_{d-1}(C_{d-1}(r-1) + 3 ). \]
Therefore, by Lemma \ref{reg},
\[ \reg(V) \leqslant C_{d-1}(C_{d-1}(r-1) + 3) + r = C_d(r).\]
\end{proof}

\begin{remark} \normalfont
As far as we know, this theorem provides the first explicit upper bound for regularity of $\OI$-modules. We do not know if this bound can be improved substantially.
The careful reader can see that in the proof we have to use $\reg(\overline{V})$ to bound $t_2(\overline{V})$, which significantly amplifies the final upper bound of $\reg(V)$. If a more optimal upper bound for $t_2(\overline{V})$ becomes available as in the case of $\FI$-modules (see the proof of \cite[Theorem 2.4]{li} or $\VI$-modules (see the proof of \cite[Theorem 3.2]{gl3}), then the conclusion of this theorem can be improved.
\end{remark}

By the following corollary, we can do homological algebra safely in the category of $\OI$-modules presented in finite degrees.

\begin{corollary} \label{abelian category}
The category of $\OI$-modules presented in finite degrees is abelian.
\end{corollary}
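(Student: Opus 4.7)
The plan is to verify that the full subcategory of $\OI$-modules presented in finite degrees is closed under taking kernels, cokernels, and finite direct sums inside the abelian category $\OI\Mod$. Finite direct sums are immediate, since $t_i$ is subadditive on direct sums. So the substance lies in showing that for any morphism $f \colon V \to W$ between $\OI$-modules presented in finite degrees, both $\ker f$ and $\coker f$ are again presented in finite degrees. The crucial ingredient is Theorem \ref{regularity theorem}, which lets us control $t_2$ in terms of $t_0$ and $t_1$.

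First I would handle $\coker f$. From the short exact sequence $0 \to \Image f \to W \to \coker f \to 0$ and the long exact sequence of the derived functors $\h_i^{\OI}$, we get $t_0(\coker f) \leqslant t_0(W)$ and
\[ t_1(\coker f) \leqslant \max\{t_1(W), \, t_0(\Image f)\} \leqslant \max\{t_1(W), \, t_0(V)\}, \]
both of which are finite. Hence $\coker f$ is presented in finite degrees.

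Next I would show $\Image f$ is presented in finite degrees. Clearly $t_0(\Image f) \leqslant t_0(V) < \infty$. For the relation degree, the same long exact sequence applied one step further yields
\[ t_1(\Image f) \leqslant \max\{t_2(\coker f), \, t_1(W)\}. \]
This is the step where the main obstacle appears: we need $t_2(\coker f) < \infty$, which is not built into the mere assumption that $\coker f$ is presented in finite degrees. Here Theorem \ref{regularity theorem} comes to the rescue: since $\prd(\coker f) < \infty$ and $t_0(\coker f) < \infty$, the regularity $\reg(\coker f)$ is finite, and in particular $t_2(\coker f) \leqslant \reg(\coker f) + 2 < \infty$. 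Hence $\Image f$ is presented in finite degrees.

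Finally, for $\ker f$, apply the long exact sequence to $0 \to \ker f \to V \to \Image f \to 0$ to obtain
\[ t_0(\ker f) \leqslant \max\{t_0(V), \, t_1(\Image f)\}, \qquad t_1(\ker f) \leqslant \max\{t_1(V), \, t_2(\Image f)\}. \]
We already know that $\Image f$ is presented in finite degrees, so by a second application of Theorem \ref{regularity theorem} its regularity is finite and $t_2(\Image f) < \infty$. Thus $\ker f$ is presented in finite degrees, which completes the verification. The whole argument is a standard diagram chase through the homology long exact sequence; the only nontrivial content is that both invocations of the regularity bound let us promote finiteness of $t_0, t_1$ to finiteness of $t_2$, which is exactly what is needed to close the induction on the three modules $\coker f$, $\Image f$, and $\ker f$.
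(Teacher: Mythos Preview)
Your proof is correct and follows essentially the same approach as the paper: the paper breaks the morphism into the two short exact sequences $0 \to \Ker \phi \to U \to \Image \phi \to 0$ and $0 \to \Image \phi \to V \to \coKer \phi \to 0$ and declares the rest a ``routine homological check,'' which is precisely the long-exact-sequence chase you have written out, with Theorem~\ref{regularity theorem} supplying the needed finiteness of $t_2$. You have simply made explicit the details the paper omits.
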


\begin{proof}
The proof of this result is a routine homological check. Let $\phi: U \to V$ be a morphism in this category. It suffices to show that $\Ker \phi$ and $\coKer \phi$ also lie in it. Breaking this morphism into two short exact sequences
\begin{align*}
& 0 \to \Ker \phi \to U \to \Image \phi \to 0, \\
& 0 \to \Image \phi \to V \to \coKer \phi \to 0,
\end{align*}
one can check that all terms in them are presented in finite degrees.
\end{proof}

For an $\OI$-module $V$ and any $n \in \mathbb{N}$, we define a submodule $\tau_n V$ by letting $(\tau_n V)_i = 0$ for $i <n$ and $(\tau_n V)_i = V_i$ for $i \geqslant n$. The next corollary, which says that $\tau_r V$ has a generalized Koszul property. is an immediate aftermath of \cite[Theorem 5.6]{gl3} and Theorem \ref{regularity theorem}.

\begin{corollary}
If $V$ is presented in finite degrees and $r \geqslant 2^{2^{t_0(V)}} \prd(V)$, then for any $i \in \mathbb{N}$, $H_i(\tau_r V)$ either is 0, or is generated by its value on the object $[r+i]$.
\end{corollary}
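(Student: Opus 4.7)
The plan is to chain Theorem \ref{regularity theorem} with the cited external result \cite[Theorem 5.6]{gl3}, and essentially no further argument is required. First I would use the hypothesis on $r$ together with Theorem \ref{regularity theorem} to deduce
\[ \reg(V) \leqslant 2^{2^{t_0(V)}} \prd(V) \leqslant r. \]
Unwinding the definition $\reg(V) = \sup_{i}\{ t_i(V) - i \}$, this bound is equivalent to the family of inequalities $t_i(V) \leqslant r + i$ for every $i \in \mathbb{N}$, which is the standard input needed for a generalized Koszul statement.

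Second, I would invoke \cite[Theorem 5.6]{gl3}. Although that theorem is phrased for $\VI$-modules, its proof is carried out within a general shift-functor framework whose axioms are satisfied by $\OI$ just as they are by $\VI$; these are the very same axiomatic properties of $\Sigma$ and $\h^{\OI}_{\bullet}$ used throughout Section~2 and in the proof of Theorem \ref{inductive machinery}. Transcribed to the $\OI$ setting, that theorem says that whenever $r$ dominates the regularity of $V$ as above, each homology module $\h^{\OI}_i(\tau_r V)$ is either zero or generated by its value on $[r+i]$, which is exactly the conclusion of the corollary.

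The only genuine point to verify is that the hypotheses of the $\VI$-module result really transfer to $\OI$-modules without modification, and this is the mild obstacle: one has to check that the shift functor $\Sigma$ and the derived homology functors $\h^{\OI}_i$ enjoy the formal properties (exactness of $\Sigma$, the existence of short exact sequences $0 \to V \to \Sigma V \to \Delta V \to 0$ under $\kappa V = 0$, and compatibility with the truncation $\tau_r$) that underlie the proof in \cite{gl3}. All of these properties have already been established in the preliminaries and in the proof of Theorem \ref{shift theorem}, so the verification is routine and the corollary follows.
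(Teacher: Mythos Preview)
Your proposal is correct and matches the paper's approach exactly: the paper simply states that the corollary ``is an immediate aftermath of \cite[Theorem 5.6]{gl3} and Theorem \ref{regularity theorem},'' which is precisely the two-step chaining you describe. Your additional remarks about why the $\VI$-module argument from \cite{gl3} transfers to the $\OI$ setting are more explicit than what the paper provides, but the underlying strategy is identical.
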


\subsection{Inductive machinery}

An important consequence of Corollary \ref{abelian category} is to allow us to extend the inductive machinery introduced in \cite{gl} from the category of finitely generated $\OI$-modules over Noetherian coefficient rings to the category of $\OI$-modules presented in finite degrees. Let us recall \cite[Definition 4]{gl}.

\begin{definition} \label{properties of properties}
Suppose that $\mathscr{T}$ is a subcategory of $\OI \Mod$ and $F: \mathscr{T} \to \mathscr{T}$ is a functor. We say that a property (P) of some $\OI$-modules is:
\begin{itemize}
\item
\emph{glueable on $\mathscr{T}$} if, for every short exact sequence $0 \to U \to V \to W \to 0$ in $\mathscr{T}$:
\begin{equation*}
\mbox{$U$ and $W$ has property (P)} \quad \Longrightarrow \quad \mbox{$V$ has property (P);}
\end{equation*}

\item
\emph{$F$-dominant on $\mathscr{T}$} if, for every $V\in \mathscr{T}$:
\begin{equation*}
\mbox{$FV$ has property (P)} \quad \Longrightarrow \quad \mbox{$V$ has property (P);}
\end{equation*}

\item
\emph{$F$-predominant on $\mathscr{T}$} if, for every $V\in \mathscr{T}$:
\begin{equation*}
\mbox{$FV$ has property (P) and $\kappa V=0$} \quad \Longrightarrow \quad  \mbox{$V$ has property (P).}
\end{equation*}
\end{itemize}
\end{definition}

The following theorem provides a convenient way to check qualitative representation theoretic properties of $\OI$-modules presented in finite degrees.

\begin{theorem}
Let (P) be a property of some $\OI$-modules and suppose that the zero module has property (P). Then every $\OI$-module presented in finite degrees has property (P) if and only if (P) is glueable, $\Sigma$-dominant, and $\Delta$-predominant.
\end{theorem}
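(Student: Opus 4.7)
The plan is to dispatch the ``only if'' direction as a vacuous check and to prove the ``if'' direction by induction on $d := t_0(V)$, with Theorem \ref{shift theorem} as the central input and Corollary \ref{abelian category} keeping the auxiliary modules inside the category of $\OI$-modules presented in finite degrees. For necessity: if every such $V$ has (P), then the three implications in Definition \ref{properties of properties} hold tautologically on this category, because it is closed under kernels, cokernels, and extensions (Corollary \ref{abelian category}) and under $\Sigma$ and $\Delta$ (Lemma \ref{decomposition lemma} together with the inequalities preceding Lemma \ref{reg}).

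For sufficiency, fix a property (P) satisfying the three closure hypotheses and proceed by induction on $d = t_0(V)$. The base case $d = -1$ reduces to $V = 0$, which has (P) by hypothesis. For $d \geqslant 0$, choose any integer $r \geqslant \prd(V)$ and form $\overline{V} = \Sigma^r V / (\Sigma^r V)_{\prec d}$. Applying $\Sigma^r$ to a finite-degree presentation of $V$ (and invoking Lemma \ref{decomposition lemma}) shows $\Sigma^r V$ is itself presented in finite degrees, so by Corollary \ref{abelian category} the same holds for $(\Sigma^r V)_{\prec d}$, $\overline{V}$, and $\Delta \overline{V}$.

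By Lemma \ref{inductive inequalities} we have $t_0(\Delta \overline{V}) \leqslant d - 1$, so the induction hypothesis yields that $\Delta \overline{V}$ has (P); combined with $\kappa \overline{V} = 0$ from Theorem \ref{shift theorem}, the $\Delta$-predominance of (P) forces $\overline{V}$ to have (P). Similarly, $(\Sigma^r V)_{\prec d}$ is generated in degrees strictly less than $d$, so by induction it has (P) as well. Glueability applied to
\[ 0 \to (\Sigma^r V)_{\prec d} \to \Sigma^r V \to \overline{V} \to 0 \]
then promotes (P) to $\Sigma^r V$, and $r$ iterated applications of $\Sigma$-dominance transport (P) down to $V$.

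The main obstacle is categorical rather than combinatorial: at every step of the induction one must know that $\Sigma^r V$, $(\Sigma^r V)_{\prec d}$, $\overline{V}$, and $\Delta \overline{V}$ all lie in the category of $\OI$-modules presented in finite degrees, so that the induction hypothesis and the three closure properties can legitimately be invoked on them. This is exactly what Corollary \ref{abelian category} (itself a consequence of the regularity bound in Theorem \ref{regularity theorem}) supplies, and it is the decisive difference between the present argument and the Noetherian-based inductive machinery of \cite{gl}, where one instead relied on finitely generated modules over a Noetherian coefficient ring being Noetherian.
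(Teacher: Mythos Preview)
Your proof is correct and follows essentially the same route as the paper: induction on $t_0(V)$, using Theorem~\ref{shift theorem} to get $\kappa\overline{V}=0$, then $\Delta$-predominance for $\overline{V}$, the induction hypothesis for $(\Sigma^r V)_{\prec d}$, glueability for $\Sigma^r V$, and $\Sigma$-dominance to descend to $V$. You are more explicit than the paper about why the auxiliary modules $\Sigma^r V$, $(\Sigma^r V)_{\prec d}$, $\overline{V}$, and $\Delta\overline{V}$ remain presented in finite degrees (via Corollary~\ref{abelian category}), a point the paper leaves implicit; this extra care is appropriate and does not constitute a different approach.
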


\begin{proof}
This theorem actually formalizes the strategy we used to show Theorem \ref{regularity theorem}. One direction is trivial, so we show the other one.

Firstly, since $t_0(\Delta \overline{V}) < t_0(\overline{V}) \leqslant t_0(V)$, the induction hypothesis guarantees $\Delta \overline{V}$ has property (P). Since (P) is $\Delta$-predominant and $\kappa \overline{V} = 0$, $\overline{V}$ has property (P). Similarly, $(\Sigma^rV)_{\prec d}$ has property (P). Since (P) is glueable, by the short exact sequence
\[ 0 \to (\Sigma^rV)_{\prec d} \to \Sigma^r V \to \overline{V} \to 0\]
we conclude that $\Sigma^r V$ has property (P). But (P) is $\Sigma$-predominant, so $V$ has property (P) as well.
\end{proof}

For instances, let (P) be the property of having finite regularity. Then Lemma \ref{reg} asserts that (P) is $\Sigma$-dominant and $\Delta$-predominant. It is easy to see that (P) is glueable. Therefore, by the above theorem we know that every $\OI$ presented in finite degrees has finite regularity.

\subsection{Filtration stability}

If we apply $\Sigma^n$ to an $\OI$-module $V$ presented in finite degrees, it may not become a semi-induced module for $n \gg 0$, as explained in Example \ref{example}. However, we can still get a weaker stability result. That is, there is a finite set of $\OI$-modules such that each $\Sigma^n V$ has a finite filtration whose successive quotients lie in this set.

\begin{theorem}
Let $V$ be an $\OI$-module presented in finite degrees. Then there exist a finite collection of $\OI$-modules $\mathscr{F}_V = \{V^1, \ldots, V^s \}$ and an integer $N \in \N$ such that for every $n \geqslant N$, there is a finite filtration on $\Sigma^n V$ with the property that each successive quotient is isomorphic to a member $V^i \in \mathscr{F}_V$. Moreover, $t_0(V^i) \leqslant t_0(V)$ for $i \in [s]$.
\end{theorem}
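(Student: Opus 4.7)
The plan is to induct on $d = t_0(V)$. For the base case $d = 0$ (with $V$ nonzero), fix $r \geqslant \prd(V)$ and form $\overline{V}$ as in Theorem \ref{shift theorem}. Since $(\Sigma^r V)_{\prec 0} = 0$ we have $\Sigma^r V = \overline{V}$, and since $t_0(\Delta \overline{V}) \leqslant d-1 = -1$ the module $\Delta \overline{V}$ vanishes. Combined with $\kappa \overline{V} = 0$, the short exact sequence $0 \to \overline{V} \to \Sigma \overline{V} \to \Delta \overline{V} \to 0$ forces $\Sigma^{r+i} V \cong \Sigma^r V$ for every $i \geqslant 0$, so one takes $\mathscr{F}_V = \{\Sigma^r V\}$ and $N = r$.

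For the inductive step ($d \geqslant 1$), fix $r \geqslant \prd(V)$ and apply Theorem \ref{shift theorem} to obtain $\overline{V}$ with $\kappa \overline{V} = 0$, together with the two short exact sequences
\begin{align*}
& 0 \to (\Sigma^r V)_{\prec d} \to \Sigma^r V \to \overline{V} \to 0,\\
& 0 \to \overline{V} \to \Sigma \overline{V} \to \Delta \overline{V} \to 0.
\end{align*}
By Corollary \ref{abelian category}, both $(\Sigma^r V)_{\prec d}$ and $\Delta \overline{V}$ are themselves presented in finite degrees, and each has $t_0 \leqslant d-1$ (the second by Lemma \ref{inductive inequalities}). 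The induction hypothesis applied to them then supplies finite collections $\mathscr{F}_1, \mathscr{F}_2$ and thresholds $N_1, N_2$ with the desired filtration properties.

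Next, since $\Sigma$ is exact and $\kappa \overline{V} = 0$, iterating the second sequence produces a chain of injections $\overline{V} \hookrightarrow \Sigma \overline{V} \hookrightarrow \Sigma^2 \overline{V} \hookrightarrow \cdots$ in which the $i$-th successive quotient is $\Sigma^{i-1}(\Delta \overline{V})$. For $m \geqslant N_2$, truncating the chain at $\Sigma^{N_2}\overline{V}$ yields a filtration of $\Sigma^m \overline{V}$ whose bottom piece is the fixed module $\Sigma^{N_2} \overline{V}$ and whose remaining quotients $\Sigma^{i-1}(\Delta \overline{V})$ for $N_2+1 \leqslant i \leqslant m$ each admit a refinement with quotients in $\mathscr{F}_2$. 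Now, for $n \geqslant r + \max(N_1, N_2)$, set $m = n - r$ and apply $\Sigma^m$ to the first sequence; concatenating the induction-supplied filtration of $\Sigma^m((\Sigma^r V)_{\prec d})$ (quotients in $\mathscr{F}_1$) with the filtration of $\Sigma^m \overline{V}$ just built (quotients in $\{\Sigma^{N_2}\overline{V}\}\cup\mathscr{F}_2$) produces a filtration of $\Sigma^n V$ with successive quotients in
\[ \mathscr{F}_V := \mathscr{F}_1 \cup \{\Sigma^{N_2}\overline{V}\} \cup \mathscr{F}_2. \]
The bound $t_0(V^i) \leqslant d$ follows from $t_0(\overline{V}) \leqslant d$, the fact that $\Sigma$ does not increase $t_0$ (see the discussion after Lemma \ref{decomposition lemma}), and the inductive bounds on $\mathscr{F}_1, \mathscr{F}_2$.

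The main obstacle is verifying that the induction hypothesis is legitimately available at the recursive calls, i.e.\ that $(\Sigma^r V)_{\prec d}$ and $\Delta \overline{V}$ really are presented in finite degrees; this is where Corollary \ref{abelian category} is essential, since neither module is obviously so from its definition. A smaller bookkeeping issue is that the ascending chain $\Sigma^i \overline{V} \hookrightarrow \Sigma^{i+1}\overline{V}$ must be genuinely coherent: one checks that applying $\Sigma$ to the natural map $\overline{V} \to \Sigma \overline{V}$ reproduces the natural map $\Sigma \overline{V} \to \Sigma^2 \overline{V}$, which is immediate from the naturality of the transformation $W \mapsto (W \to \Sigma W)$.
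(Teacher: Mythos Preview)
Your argument is correct and follows essentially the same route as the paper's, which packages the identical induction on $t_0(V)$ into the abstract framework of Theorem~\ref{inductive machinery} (verifying that the filtration-stability property is glueable, $\Sigma$-dominant, and $\Delta$-predominant) rather than arguing directly; your collection $\mathscr{F}_V = \mathscr{F}_1 \cup \{\Sigma^{N_2}\overline{V}\} \cup \mathscr{F}_2$ matches the paper's, and even yields the same cardinality bound $2^{d+1}-1$ proved there as a corollary.

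One correction, however: the claim in your final paragraph that applying $\Sigma$ to the natural map $\overline{V}\to\Sigma\overline{V}$ reproduces the natural map $\Sigma\overline{V}\to\Sigma^2\overline{V}$ is \emph{false} for $\OI$-modules. At level $n$ the former is $\overline{V}(\iota_{n+1})$ while the latter is $\overline{V}(\sigma(\iota_n))$, and $\sigma(\iota_n)(1)=1\neq 2=\iota_{n+1}(1)$; this is exactly the phenomenon behind the paper's later remark that $\Sigma$ and $\Delta$ fail to commute for $\OI$ (unlike for $\FI$). Naturality of the transformation only gives $\Sigma(\iota_{\overline V})\circ\iota_{\overline V}=\iota_{\Sigma\overline V}\circ\iota_{\overline V}$, i.e.\ agreement on the image of $\iota_{\overline V}$, not equality of the two maps. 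Fortunately your proof does not actually need this: the chain obtained by repeatedly applying the exact functor $\Sigma$ to $0\to\overline V\to\Sigma\overline V\to\Delta\overline V\to 0$ already consists of injections $\Sigma^j(\iota_{\overline V})$ with cokernels $\Sigma^j(\Delta\overline V)$, and since the theorem only asks that each successive quotient be \emph{isomorphic} to some fixed $V^i$, no further coherence is required.
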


\begin{proof}
Let (P) be the property addressed in the theorem. We show that (P) is glueable, $\Sigma$-dominant, and $\Delta$-predominant. But by carefully checking the proof of Theorem \ref{inductive machinery} we find that the glueable condition can be replaced by the following weaker condition:

(w) In the short exact sequence
\[ 0 \to (\Sigma^rV)_{\prec d} \to \Sigma^r V \to \overline{V} \to 0,\]
if the first and the third terms satisfy (P), so does the middle term.
But this is clearly true. Indeed, if $\Sigma^l (\Sigma^rV)_{\prec d}$ and $\Sigma^m \overline{V}$ satisfy (P) for $l \geqslant N_1$ and $m \geqslant N_2$. Then take $N = \max \{N_1, \, N_2 \}$ and let
\[ \mathscr{F}_V = \mathscr{F}_{(\Sigma^rV)_{\prec d}} \cup \mathscr{F}_{\overline{V}}. \]
Note that the generation degree of each module in $\mathscr{F}_{(\Sigma^rV)_{\prec d}}$ (resp., $\mathscr{F}_{\overline{V}}$) is at most $d-1$ (resp., $d$). We conclude that $\Sigma^n V$ satisfies (P) for $n \geqslant N$.

If $\Sigma^l (\Sigma V)$ satisfies (P) for $l \geqslant N$, then clearly $\Sigma^n V$ has property (P) for $n \geqslant N+1$ by letting $\mathscr{F}_V = \mathscr{F}_{\Sigma V}$; that is, (P) is $\Sigma$-dominant.

Now suppose that $\kappa V = 0$, or equivalently the natural map $V \to \Sigma V$ is injective, and $\Sigma^n (\Delta V)$ satisfies property (P) when $n \geqslant N$ for a certain $N \in \N$ and $\mathscr{F}_{\Delta V}$. We apply $\Sigma^n$ to get the exact sequence
\[ 0 \to \Sigma^n V \to \Sigma^{n+1} V \to \Sigma^n (\Delta V) \to 0. \]
We define
\[ \mathscr{F}_V = \mathscr{F}_{\Delta V} \cup \{ \Sigma^N V\}. \]
Note that the generation degree of every member in $\mathscr{F}_V$ is at most $t_0(V)$. Furthermore, $\Sigma^{N+1} V$ has a filtration whose successive quotients all lie in $\mathscr{F}_V$. In the next step, we have
\[ 0 \to \Sigma^{N+1} V \to \Sigma^{N+2} V \to \Sigma^{N+1} (\Delta V) \to 0. \]
Combining the filtration for $\Sigma^{N+1} V$ and the filtration for $\Sigma^{N+1}(\Delta V)$, we deduce that $\Sigma^{N+2} V$ has a filtration whose successive quotients all lie in $\mathscr{F}_V$. By an induction on $n$, we conclude that $\Sigma^n V$ satisfies property $V$ for $n \geqslant N$. That is, the property (P) is $\Delta$-predominant.
\end{proof}

Note that members in the set $\mathscr{F}_V$ might not have ``nice" properties. For instance, consider the module $V$ in Example \ref{example}. A natural choice is $\mathscr{F}_V = \{V, \, U \}$ as specified in the example. One knows that $\kappa V = 0$, but $\kappa U = U$.

One can bound the size of $\mathscr{F}_V$ appearing in the previous statement. That is:

\begin{corollary}
In the previous theorem, one can choose a suitable collection $\mathscr{F}_V$ such that its cardinality does not exceed $2^{d+1} - 1$, where $d = t_0(V)$.
\end{corollary}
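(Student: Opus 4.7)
The plan is to bookkeep the cardinality of $\mathscr{F}_V$ constructed in the proof of the filtration stability theorem, tracking it through the recursion on $d = t_0(V)$. Define $f(d)$ to be the smallest cardinality one can achieve for $\mathscr{F}_V$, taken over all $\OI$-modules $V$ presented in finite degrees with $t_0(V) \leqslant d$. I will show by induction on $d$ that $f(d) \leqslant 2^{d+1}-1$, which gives the desired bound.

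For the base case $d=0$, we have $(\Sigma^r V)_{\prec 0} = 0$, so $\overline{V} \cong \Sigma^r V$, and $\Delta \overline{V}$ has $t_0 \leqslant -1$, hence is zero. Running the $\Delta$-predominant step of the previous proof with $\mathscr{F}_{\Delta \overline{V}} = \emptyset$ yields $\mathscr{F}_{\overline{V}} = \{\Sigma^N \overline{V}\}$ of size $1$. The $\Sigma$-dominant step then passes this collection to $V$ without enlarging it, so $f(0) \leqslant 1 = 2^1 - 1$.

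For the inductive step $d \geqslant 1$, I trace through the construction in the proof of the theorem. The weak glueable condition applied to the short exact sequence
\[ 0 \to (\Sigma^r V)_{\prec d} \to \Sigma^r V \to \overline{V} \to 0 \]
produces $\mathscr{F}_{\Sigma^r V} = \mathscr{F}_{(\Sigma^r V)_{\prec d}} \cup \mathscr{F}_{\overline{V}}$. Here $t_0((\Sigma^r V)_{\prec d}) \leqslant d-1$, so by the induction hypothesis $|\mathscr{F}_{(\Sigma^r V)_{\prec d}}| \leqslant f(d-1)$. For $\overline{V}$, the hypothesis $\kappa \overline{V} = 0$ (guaranteed by Theorem \ref{shift theorem}) lets us invoke the $\Delta$-predominant step: $\mathscr{F}_{\overline{V}} = \mathscr{F}_{\Delta \overline{V}} \cup \{\Sigma^N \overline{V}\}$, and since $t_0(\Delta \overline{V}) \leqslant d-1$, the induction hypothesis gives $|\mathscr{F}_{\overline{V}}| \leqslant f(d-1) + 1$. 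Finally the $\Sigma$-dominant step transfers $\mathscr{F}_{\Sigma^r V}$ back to $V$ without enlarging its size. Combining,
\[ f(d) \leqslant f(d-1) + \bigl(f(d-1) + 1\bigr) = 2f(d-1) + 1 \leqslant 2(2^{d}-1) + 1 = 2^{d+1} - 1. \]

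The argument is essentially arithmetic on the recursion $f(d) \leqslant 2f(d-1)+1$, so there is no serious obstacle beyond bookkeeping. The two points requiring care are: first, that $\mathscr{F}_V$ is to be treated as a set rather than a multiset (so duplicates in the unions above do not inflate the count), and second, that at each reduction step $t_0$ is strictly controlled — $(\Sigma^r V)_{\prec d}$ and $\Delta \overline{V}$ both have generation degree at most $d-1$, which is exactly what makes the induction close.
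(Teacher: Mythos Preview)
Your proof is correct and follows essentially the same approach as the paper: both trace the construction in the proof of the filtration stability theorem to obtain $\mathscr{F}_V = \mathscr{F}_{(\Sigma^r V)_{\prec d}} \cup \mathscr{F}_{\Delta \overline{V}} \cup \{\Sigma^N \overline{V}\}$, apply the induction hypothesis to the two pieces of generation degree $\leqslant d-1$, and solve the recursion $f(d)\leqslant 2f(d-1)+1$ with $f(0)\leqslant 1$. Your write-up is in fact slightly more explicit about the base case and about which singleton is adjoined than the paper's version.
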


\begin{proof}
We use an induction on $d$. By looking at the proof of the previous theorem we find:
\[ \mathscr{F}_V = \mathscr{F}_{\Delta \overline{V}} \cup \{ \Sigma^N \Delta V \} \cup \mathscr{F}_{(\Sigma^rV)_{\prec d}}. \]
Therefore, one obtains:
\[ | \mathscr{F}_V | \leqslant 1+ |\mathscr{F}_{\Delta \overline{V}} | + | \mathscr{F}_{(\Sigma^r V)_{\prec d}} |. \]
But by the induction hypothesis, both $ |\mathscr{F}_{\Delta \overline{V}} |$ and $| \mathscr{F}_{(\Sigma^r V)_{\prec d}} |$ do not exceed $2^d - 1$, so the conclusion holds for $\mathscr{F}_V$.
\end{proof}

\subsection{Hilbert functions}

In this subsection we study the Hilbert function of fintely generated $\OI$-modules $V$ when $\kk$ is a field. It is already know that these functions are eventually polynomial. The following theorem tells us where this phenomenon begins.

\begin{theorem}
Let $V$ be a finitely generated $\OI$-module over a field $k$. Then there exists a rational polynomial $P$ such that $\dim_{\kk} V_n = P(n)$ whenever
\[ n \geqslant 2^{2^{t_0(V)}} \prd(V). \]
Moreover, the degree of $P$ is at most $t_0(V)$.
\end{theorem}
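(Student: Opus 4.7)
The plan is to mirror the inductive argument used in the proof of Theorem \ref{regularity theorem}, but tracking Hilbert functions instead of regularity. I induct on $d = t_0(V)$ and establish the sharper claim: if $V$ is a finitely generated $\OI$-module with $t_0(V) \leqslant d$ and $\prd(V) \leqslant r$, then $\dim_{\kk} V_n$ agrees with a rational polynomial of degree at most $d$ for every $n \geqslant C_d(r)$, where $C_d$ is the recursive function from the proof of Theorem \ref{regularity theorem}. Since Lemma \ref{increasing} gives $C_d(r) \leqslant 2^{2^d} r$, this implies the stated bound.

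For the base case $d = 0$, we have $(\Sigma^r V)_{\prec 0} = 0$, so $\overline{V} = \Sigma^r V$, and $t_0(\Delta \overline{V}) \leqslant -1$ forces $\Delta \overline{V} = 0$. Theorem \ref{shift theorem} gives $\kappa \overline{V} = 0$, hence the natural map $\overline{V} \to \Sigma \overline{V}$ is an isomorphism and $\dim_{\kk} \overline{V}_n$ is constant in $n$. Re-indexing via $\dim_{\kk} V_{n+r} = \dim_{\kk} \overline{V}_n$ then produces a constant polynomial describing $\dim_{\kk} V_n$ for $n \geqslant r = C_0(r)$.

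For the inductive step $d \geqslant 1$, I use the two short exact sequences from the proof of Theorem \ref{regularity theorem}. Because $\kk$ is a field, the category of finitely generated $\OI$-modules is Noetherian and closed under $\Sigma$, $\Delta$, and subquotients, so $\Delta \overline{V}$ and $(\Sigma^r V)_{\prec d}$ remain finitely generated and eligible for induction. By Lemma \ref{inductive inequalities}, $t_0(\Delta \overline{V}) \leqslant d-1$ and $\prd(\Delta \overline{V}) \leqslant r-1$, so the induction hypothesis produces a polynomial of degree at most $d-1$ agreeing with $\dim(\Delta \overline{V})_n$ for $n \geqslant C_{d-1}(r-1)$. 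The short exact sequence $0 \to \overline{V} \to \Sigma \overline{V} \to \Delta \overline{V} \to 0$ gives
\[ \dim \overline{V}_{n+1} - \dim \overline{V}_n = \dim (\Delta \overline{V})_n, \]
so summing telescopically converts the polynomial growth of the first differences into a polynomial of degree at most $d$ for $\dim \overline{V}_n$ on $n \geqslant C_{d-1}(r-1)$. In parallel, $(\Sigma^r V)_{\prec d}$ has $t_0 \leqslant d-1$ and the bound $\prd \leqslant C_{d-1}(r-1) + 3$ extracted from the proof of Theorem \ref{regularity theorem}, so by induction its Hilbert function is polynomial of degree at most $d-1$ for $n \geqslant C_{d-1}(C_{d-1}(r-1)+3)$. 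Adding dimensions across $0 \to (\Sigma^r V)_{\prec d} \to \Sigma^r V \to \overline{V} \to 0$ and then re-indexing via $\dim V_n = \dim (\Sigma^r V)_{n-r}$ yields the desired polynomial formula for $\dim V_n$ starting at $n \geqslant r + C_{d-1}(C_{d-1}(r-1)+3) = C_d(r)$.

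The main obstacle is the book-keeping of thresholds: one must verify that summing a polynomial identity valid on $n \geqslant N$ produces a polynomial identity of one higher degree on precisely the same range $n \geqslant N$ (rather than a shifted range), and one must confirm that the presentation-degree bounds for $\Delta \overline{V}$ and $(\Sigma^r V)_{\prec d}$ feed into the induction to give exactly the threshold $C_d(r)$ with no off-by-one losses. This parallels the tracking performed in the proof of Theorem \ref{regularity theorem}, which is why the two thresholds coincide.
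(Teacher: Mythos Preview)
Your proposal is correct and follows essentially the same approach as the paper's own proof: induct on $d=t_0(V)$, handle $\Delta\overline{V}$ and $(\Sigma^r V)_{\prec d}$ separately via the induction hypothesis with the bounds from Lemma \ref{inductive inequalities} and the proof of Theorem \ref{regularity theorem}, then reassemble through the two short exact sequences and re-index to recover the threshold $C_d(r)$. The only cosmetic difference is that the paper records the threshold for $\dim\overline{V}_n$ as $C_{d-1}(r-1)+1$ rather than $C_{d-1}(r-1)$, but either is dominated by $C_{d-1}(C_{d-1}(r-1)+3)$ so this does not affect the final bound.
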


\begin{proof}
The proof is almost the same as that of Theorem \ref{regularity theorem}. Let $d = t_0(V)$ and $r = \prd(V)$. The conclusion holds trivially for $d = -1$ (by convention, we suppose that the degree of the zero polynomial is $-1$), so we suppose that $d \geqslant 0$. It suffices to show the conclusion for $n \geqslant C_d(r)$. We use an induction on $d$.

For $d = 0$, in the proof of Theorem \ref{regularity theorem} we know $\Sigma^r V \cong \overline{V}$. But $\overline{V} \cong \Sigma \overline{V}$ since $\Delta \overline{V} = 0$. This happens if and only if the Hilbert function of $\Sigma^r V$ is a constant function. Equivalently, the conclusion holds for $n \geqslant r = C_0(r)$.

Now suppose that $d \geqslant 1$. By the induction hypothesis, there is a rational polynomial $Q$ with degree at most $d' = t_0((\Sigma^r V)_{\prec d}) \leqslant d-1$ such that
\[ \dim_{\kk} ((\Sigma^r V)_{\prec d})_n = Q(n) \quad \mbox{ for } n \geqslant C_{d'}(r'),\]
where
\[ r' = \max \{t_0((\Sigma^r V)_{\prec d}), \, t_1((\Sigma^r V)_{\prec d}) \} \leqslant \max \{d-1, \, C_{d-1} (r-1) + 3\} = C_{d-1}(r-1) + 3.\]
and the second inequality is shown in the proof of Theorem \ref{regularity theorem}. Therefore,
\[ \dim_{\kk} ((\Sigma^r  V)_{\prec d})_n = Q(n) \quad \mbox{ for } n \geqslant C_{d-1}(C_{d-1}(r-1) + 3).\]

Consider the short exact sequence
\[ 0 \to \overline{V} \to \Sigma \overline{V} \to \Delta \overline{V} \to 0. \]
Note that $t_0(\Delta \overline{V}) \leqslant d-1$ and $t_1 (\Delta \overline{V}) \leqslant r-1$ by Lemma \ref{inductive inequalities}. By an analogue argument, there is a rational polynomial $T$ with degree at most $t_0(\Delta \overline{V}) \leqslant d-1$ such that
\[\dim_{\kk} (\Delta \overline{V})_n = T(n) \quad \mbox{ for } n \geqslant C_{d-1}(r-1).\]
But we know
\[ \dim_{\kk} (\Delta \overline{V})_n = \dim_{\kk} \overline{V}_{n+1} - \dim_{\kk} \overline{V}_n. \]
Consequently, the functions $n \mapsto \dim_{\kk} \overline{V}_n$ coincides with a polynomial with degree at most $d$ for
\[n \geqslant C_{d-1}(r-1) + 1. \]
By the short exact sequence
\[ 0 \to (\Sigma^r  V)_{\prec d} \to \Sigma^r V \to \overline{V} \to 0,\]
we know that the function $n \mapsto \dim_{\kk} (\Sigma^r  V)_n$ is a polynomial with degree at most $d$ for $n \geqslant C_{d-1}(C_{d-1}(r-1) + 3)$. This is equivalent to saying that the function
\[ n \mapsto \dim_{\kk} V_n, \quad n \geqslant C_{d-1}(C_{d-1}(r-1) + 3) + r = C_d(r) \]
coincides with a rational polynomial with degree at most $d$.
\end{proof}

\subsection{Semi-induced modules}

In this subsection we describe several homological characterizations of semi-induced modules. These modules have been defined in \cite[Subsection 5.6]{gl2} in the name of relative projective modules. Let us make an explicit description here.

Let $\OB$ be the subcategory of $\OI$ whose objects coincide, but morphisms in $\OB$ are bijections. That is, morphisms in $\OB$ are of the form $\mathrm{id}: [n] \to [n]$, $n \in \mathbb{N}$. The embedding functor $\epsilon: \OB \to \OI$ has a left adjoint functor $\rho: \OI \to \OB$, which induces a functor $\rho^{\ast}: \OB \Mod \to \OI \Mod$. We call $\OI$-modules isomorphic to $\rho^{\ast} T$ \textit{induced modules}, where $T$ is an $\OB$-module such that $T_n = 0$ for $n \gg 0$. An $\OI$-module is \textit{semi-induced} if it has a finite filtration such that each subquotient is an induced module. Clearly, semi-induced modules are presented in finite degrees, and include projective modules generated in finite degrees as special examples. In particular, when $\kk$ is a field, semi-induced modules are projective. We also remind the reader that the category of $\OI$-modules presented in finite degrees is abelian, so we can do homological algebra safely.

\begin{proposition} \cite[Proposition 5.3]{gl2} \label{part 1}
Let $V$ be an $\OI$-module presented in finite degrees. Then the following statements are equivalent:
\begin{enumerate}
\item $V$ is semi-induced;
\item $\h^{\OI}_1(V) = 0$;
\item $\h^{\OI}_i(V) = 0$ for all $i \geqslant 1$.
\end{enumerate}
\end{proposition}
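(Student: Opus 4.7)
This proposition is quoted from \cite[Proposition 5.3]{gl2}, so the plan is to sketch the argument of that reference. The implication $(3) \Rightarrow (2)$ is trivial. For $(1) \Rightarrow (3)$, the key step is to establish that $\h^{\OI}_i(\rho^{\ast} T) = 0$ for every $i \geqslant 1$ whenever $\rho^{\ast} T$ is an induced module. To see this, one reduces to the case where $T$ is concentrated in a single degree $m$, so that $\rho^{\ast} T \cong T_m \otimes M(m)$, and then lifts a free resolution of $T_m$ over $\kk$ to a projective $\OI$-resolution of $\rho^{\ast} T$ by tensoring with the projective module $M(m)$; computing $\h^{\OI}_{\bullet}$ on this resolution immediately gives the vanishing. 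Once this is known for a single induced module, the vanishing propagates along the finite filtration of a semi-induced module via a standard long exact sequence argument.

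The substantive direction is $(2) \Rightarrow (1)$, which I would prove by induction on $d = t_0(V)$. The base case $d = -1$ gives $V = 0$, which is vacuously semi-induced. For the inductive step, consider the short exact sequence
\[ 0 \to V_{\prec d} \to V \to \widetilde V \to 0, \]
where $\widetilde V := V / V_{\prec d}$ is generated in degree exactly $d$ with $\widetilde V_d = T_d := \h^{\OI}_0(V)_d$. The plan is to establish two claims: (a) $\widetilde V \cong T_d \otimes M(d)$ is an induced module; and (b) $\h^{\OI}_1(V_{\prec d}) = 0$. Once both hold, $V_{\prec d}$ is presented in finite degrees by Corollary \ref{abelian category} with $t_0(V_{\prec d}) \leqslant d-1$, so by induction it is semi-induced, and concatenating its filtration with the induced top piece $\widetilde V$ exhibits $V$ itself as semi-induced.

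The main obstacle is claim (a). The canonical surjection $T_d \otimes M(d) \twoheadrightarrow \widetilde V$ has some kernel $K$, and I plan to force $K = 0$ by a two-step long exact sequence argument: first, comparing $\h^{\OI}_0$ degree-by-degree in the long exact sequence for $0 \to V_{\prec d} \to V \to \widetilde V \to 0$ shows that $\h^{\OI}_0(V_{\prec d}) \to \h^{\OI}_0(V)$ is injective, which combined with $\h^{\OI}_1(V) = 0$ forces $\h^{\OI}_1(\widetilde V) = 0$; second, the long exact sequence for $0 \to K \to T_d \otimes M(d) \to \widetilde V \to 0$ combined with $\h^{\OI}_1(T_d \otimes M(d)) = 0$ (from the previous paragraph) yields $\h^{\OI}_0(K) = 0$, whence $K = 0$ because any nonzero $\OI$-module has nonzero $\h^{\OI}_0$ (a direct check from the definitions). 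Claim (b) then follows from the long exact sequence of the first short exact sequence using $\h^{\OI}_2(\widetilde V) = 0$, which is available since $\widetilde V$ is now known to be induced.
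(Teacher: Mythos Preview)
Your proposal is correct and matches the paper's approach: the paper's own proof is simply the sentence ``The argument in the proof of \cite[Proposition 5.3]{gl2} still works for modules presented in finite degrees,'' and you have faithfully reconstructed that argument. The one place where the extension from the finitely generated setting of \cite{gl2} to modules presented in finite degrees requires care is ensuring that $V_{\prec d}$ is again presented in finite degrees so the induction can continue; your appeal to Corollary~\ref{abelian category} handles this, and there is no circularity since that corollary depends only on Theorem~\ref{regularity theorem}.
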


\begin{proof}
The argument in the proof of \cite[Proposition 5.3]{gl2} still works for modules presented in finite degrees.
\end{proof}

Let $V$ be a semi-induced $\OI$-module with $t_0(V) = n$. By the following lemma, it has a natural filtration of induced modules as follows: $0 \subseteq V_{\prec 1} \subseteq \ldots \subseteq V_{\prec n} \subseteq V_{\prec n+1} = V$.

\begin{lemma} \label{natural filtration}
Let $V$ be as above. Then for $i \in [n+1]$, $V_{\prec i}$ and $V/V_{\prec i}$ are semi-induced modules, and $V_{\prec i} / V_{\prec i-1}$ is an induced module.
\end{lemma}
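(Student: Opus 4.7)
The plan is to induct on $n = t_0(V)$, with the case $V = 0$ being trivial. The heart of the argument is a Key Step: the top quotient $V/V_{\prec n}$ is isomorphic to $T \otimes M(n)$, where $T = \h^{\OI}_0(V)_n$, and hence is induced. The intermediate pieces $V_{\prec i}/V_{\prec i-1}$ for $i \leqslant n$ will then be handled by applying the inductive hypothesis to $V_{\prec n}$, which will turn out to be semi-induced of strictly smaller generation degree.

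Granting the Key Step, the rest of the lemma falls out by standard homological chases. From the short exact sequence $0 \to V_{\prec n} \to V \to V/V_{\prec n} \to 0$, the long exact sequence of $\h^{\OI}_i$ together with Proposition \ref{part 1} applied to the semi-induced modules $V$ and $V/V_{\prec n}$ forces $\h^{\OI}_i(V_{\prec n}) = 0$ for all $i \geqslant 1$, so $V_{\prec n}$ is semi-induced. Since $t_0(V_{\prec n}) < n$, the inductive hypothesis applies; via the easy identity $(V_{\prec n})_{\prec i} = V_{\prec i}$ for $i \leqslant n$, it supplies the natural filtration of $V$ through degree $n$, with the top piece $V/V_{\prec n}$ delivered by the Key Step. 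The semi-inducedness of each $V/V_{\prec i}$ for $i \leqslant n$ then follows from the exact sequence $0 \to V_{\prec n}/V_{\prec i} \to V/V_{\prec i} \to V/V_{\prec n} \to 0$, using that extensions of semi-induced modules are semi-induced (again by Proposition \ref{part 1}).

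The Key Step itself proceeds in two substeps. First, I show $V/V_{\prec n}$ is semi-induced: applying the long exact sequence to $0 \to V_{\prec n} \to V \to V/V_{\prec n} \to 0$ and observing that the natural map $\h^{\OI}_0(V_{\prec n}) \to \h^{\OI}_0(V)$ is injective (a direct degree-wise computation shows that $\h^{\OI}_0(V_{\prec n})_d$ equals $\h^{\OI}_0(V)_d$ for $d < n$ and vanishes for $d \geqslant n$), together with $\h^{\OI}_1(V) = 0$, gives $\h^{\OI}_1(V/V_{\prec n}) = 0$; Proposition \ref{part 1} then lifts this to semi-inducedness. Second, since $V/V_{\prec n}$ is supported in degrees $\geqslant n$ and generated by its degree-$n$ part $T$, there is a canonical surjection $\phi \colon T \otimes M(n) \to V/V_{\prec n}$. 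Setting $K = \Ker \phi$ and applying the long exact sequence to $0 \to K \to T \otimes M(n) \to V/V_{\prec n} \to 0$, one checks that $\phi$ induces an isomorphism on $\h^{\OI}_0$ (both sides have $\h^{\OI}_0$ concentrated in degree $n$ equal to $T$); combined with $\h^{\OI}_1(V/V_{\prec n}) = 0$, this yields $\h^{\OI}_0(K) = 0$. An elementary induction on degree (any $K$ with $\h^{\OI}_0(K) = 0$ satisfies $K_d = (K_{\prec d})_d$, which forces $K_d = 0$ inductively starting from $d = 0$) then gives $K = 0$.

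The main obstacle is the second substep, namely upgrading ``semi-induced and generated in a single degree'' to ``induced''. This is bridged by the homological characterization in Proposition \ref{part 1}, which converts the desired injectivity of $\phi$ into the vanishing of $\h^{\OI}_0(K)$. An alternative strategy that tries to induct directly on the length of a semi-induced filtration of $V$ runs into trouble because the functor $V \mapsto V_{\prec i}$ need not be exact on arbitrary short exact sequences; the induction on $t_0(V)$, coupled with the Key Step above, sidesteps this issue entirely.
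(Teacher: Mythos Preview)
Your proof is correct and rests on the same homological core as the paper's: from the exact sequence $0 \to \h^{\OI}_1(V/V_{\prec i}) \to \h^{\OI}_0(V_{\prec i}) \to \h^{\OI}_0(V)$, the degree-support mismatch (the first term lives in degrees $\geqslant i$, the second in degrees $< i$) forces $\h^{\OI}_1(V/V_{\prec i})=0$, whence Proposition~\ref{part 1} applies. The organization differs slightly: the paper runs this argument directly for every $i$ and then reads off $\h^{\OI}_1(V_{\prec i})=0$ from $\h^{\OI}_2(V/V_{\prec i})=0$, whereas you do only the top layer $i=n$ and recurse on $t_0$. Your version is a bit longer but more explicit on one point the paper leaves as ``Therefore'': that a semi-induced module generated in a single degree is actually induced (your second substep, showing $\h^{\OI}_0(K)=0$ for the kernel of $\rho^\ast(T)\to V/V_{\prec n}$). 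Both routes are equally valid; neither buys anything the other doesn't.
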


\begin{proof}
We firstly show that $V_{\prec i}$ is a semi-induced module. The short exact sequence
\begin{equation*}
0 \to V_{\prec i} \to V \to V/V_{\prec i} \to 0
\end{equation*}
induces an exact sequence
\begin{equation*}
0 \to \h_1^{\OI}(V/V_{\prec i}) \to \h_0^{\OI}(V_{\prec i}) \to \h_0^{\OI}(V) \to \h_0^{\OI}(V/V_{\prec i}) \to 0
\end{equation*}
since $\h_1^{\OI}(V) = 0$ by the previous proposition. Now note that as $\OI$-modules, $(\h_0^{\OI}(V_{\prec i})_s = 0$ for $s \geqslant i$, while $(\h_1^{\OI}(V/V_{\prec i}))_s = 0$ for $s < i$. Therefore, the only possibility is $\h_1^{\OI}(V/V_{\prec i}) = 0$; that is, $V/V_{\prec i}$ is semi-induced. But the exact sequence
\begin{equation*}
0 = \h_2^{\OI}(V/V_{\prec i}) \to \h_1^{\OI}(V_{\prec i}) \to \h_1^{\OI}(V) = 0
\end{equation*}
tells us that $\h_1^{\OI}(V_{\prec i}) = 0$, so $V_{\prec i}$ is semi-induced as well.

Clearly, $V_{\prec i} / V_{\prec i-1}$ is generated by its value on the object $[i-1]$. The above arguments also tells us that it is semi-induced via replacing $V$ and $V_{\prec i}$ by $V_{\prec i}$ and $V_{\prec i-1}$ respectively. Therefore, it is an induced module.
\end{proof}

We collect more properties of semi-induced modules in the following lemma.

\begin{lemma} \label{properties of semi-induced modules}
Let $V$ be an $\OI$-module presented in finite degrees. Then:
\begin{enumerate}
\item if $V$ is semi-induced, then the natural map $V \to \Sigma V$ is injective;
\item if $V$ is semi-induced, so are $\Sigma V$ and $\Delta V$;
\item if the natural map $V \to \Sigma V$ is injective, and $\Delta V$ is semi-induced, then $V$ is semi-induced as well; that is, being semi-induced is $\Delta$-predominant.
\end{enumerate}
\end{lemma}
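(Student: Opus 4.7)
For (1), I plan to first verify the claim on induced modules and then extend via the filtration. An induced module decomposes as a direct sum of the free modules $M(n)$, and Lemma \ref{decomposition lemma} (with $r = 1$) identifies $\Sigma M(n)$ with $M(n) \oplus M(n-1)$ in such a way that the natural map $M(n) \to \Sigma M(n)$ becomes the inclusion $\theta_\emptyset$ onto the first summand, hence injective. For a semi-induced $V$ with filtration $0 = V^0 \subset V^1 \subset \cdots \subset V^k = V$ having induced quotients, I would induct on the length: the snake lemma applied to the ladder formed by the short exact sequence $0 \to V^{i-1} \to V^i \to V^i/V^{i-1} \to 0$ and its image under the exact functor $\Sigma$, joined by the vertical natural maps, produces an exact sequence $0 \to \kappa V^{i-1} \to \kappa V^i \to \kappa(V^i/V^{i-1})$, whose outer terms vanish by the inductive hypothesis and the induced case, forcing $\kappa V^i = 0$.

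For (2), the same identification $\Sigma M(n) \cong M(n) \oplus M(n-1)$ shows $\Sigma$ preserves induced modules, and combined with Part (1) it also gives $\Delta M(n) \cong M(n-1)$, so $\Delta$ preserves induced modules too. Exactness of $\Sigma$ transports the filtration of $V$ to one of $\Sigma V$ with induced quotients. For $\Delta$, the six-term snake sequence collapses on any short exact sequence of semi-induced modules---since Part (1) makes all three $\kappa$-terms vanish---into a short exact sequence $0 \to \Delta A \to \Delta B \to \Delta C \to 0$; applied step by step to the filtration of $V$, this exhibits $\Delta V$ as semi-induced.

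For (3), Proposition \ref{part 1} reduces the problem to showing $\h^{\OI}_1(V) = 0$, and my plan is an induction on $d = t_0(V)$. The base case $d = 0$ is direct: factoring the unique morphism $[0] \to [n]$ through $\iota$ realizes $V_0 \to V_n$ as a composition of the natural injections $V(\iota)$ (injective by $\kappa V = 0$), while generation in degree zero forces surjectivity, so each transition $V_n \to V_{n+1}$ is an isomorphism and $V \cong M(0) \otimes_\kk V_0$ is induced. For the inductive step I would peel off the induced submodule $V_{\prec 1} \cong M(0) \otimes_\kk V_0$ generated by $V_0$: a snake-lemma calculation (using $\Delta M(0) = 0$) shows $V' = V/V_{\prec 1}$ again satisfies $\kappa V' = 0$ and $\Delta V' \cong \Delta V$, so the hypotheses persist, and the short exact sequence $0 \to V_{\prec 1} \to V \to V' \to 0$ together with the long exact sequence in $\h^{\OI}_\bullet$ reduces $\h^{\OI}_1(V) = 0$ to $\h^{\OI}_1(V') = 0$. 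The main obstacle will be that $t_0$ need not strictly decrease under this peeling, so a secondary induction on a refined invariant---such as $t_0(V) - \min\{n : V_n \neq 0\}$, which does strictly decrease upon peeling whenever $V_0 \neq 0$---is needed to close the recursion; the remaining case $V_0 = 0$ (where the peeling is trivial) I would handle by pulling back the natural filtration of $\Delta V$ from Lemma \ref{natural filtration} along the surjection $\Sigma V \twoheadrightarrow \Delta V$ to produce a chain $V \subset Y_0 \subset \cdots \subset Y_{d-1} = \Sigma V$ whose successive quotients are induced, and arguing directly from this structure.
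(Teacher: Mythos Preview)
Your arguments for (1) and (2) are essentially the paper's, with one caveat: over a general commutative ring $\kk$, an induced module $\rho^*(T)$ need not decompose as a direct sum of copies of $M(n)$, since $T_n$ need not be free. The fix is immediate---the same computation with the maps $\alpha_E$ shows $\Sigma\rho^*(T)\cong \rho^*(T)\oplus\rho^*(T')$ with the natural map equal to the inclusion of the first summand---so this is cosmetic.

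Part (3) has a real gap. The paper peels from the \emph{top}: it first handles the special case where $V$ is generated by a single $V_n$ (showing $V\cong\rho^*(V_n)$ by comparing $\Delta V$ with $\Delta\rho^*(V_n)$), and then in general uses the hypothesis $\h_1^{\OI}(\Delta V)=0$ to bound $t_1(V)\leqslant t_0(V)$, forcing $V/V_{\prec d}$ to be induced; the induction hypothesis then applies to $V_{\prec d}$, whose $t_0$ is strictly smaller. Your approach peels from the \emph{bottom}, removing $V_{\prec 1}$, and this does not decrease $t_0$. You recognize the obstacle and propose a secondary induction on $t_0(V)-\min\{n:V_n\neq 0\}$, but the recursion does not close: after one peeling you are in the case $V_0=0$, and your proposed handling of that case---pulling back the filtration of $\Delta V$ to a chain $V\subset Y_0\subset\cdots\subset\Sigma V$---only exhibits $V$ as a submodule of $\Sigma V$ with semi-induced cokernel, which is precisely the hypothesis you began with and tells you nothing new about $V$. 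Moreover, the base of any induction on your spread invariant would be the case where $V$ is generated in a single (possibly positive) degree, which your base case $d=0$ does not cover. To make a bottom-peeling argument work you would in effect have to establish the single-degree case first, at which point the paper's top-peeling route is the more direct path.
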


\begin{proof}
(1) Since $V$ has a natural filtration by Lemma \ref{natural filtration}, it suffices to consider the case that $V = \rho^{\ast}(T)$, where $T$ is an $\OB$-module supported on a certain object $[n]$. But by the combinatorial structure of $\OI$ and the definition of $\Sigma$, one can check that there is a short exact sequence
\begin{equation*}
0 \to \rho^{\ast} (T) \to \Sigma \rho^{\ast}(T) \to \rho^{\ast} (T') \to 0
\end{equation*}
where $T'$ is isomorphic to $T$ as $\kk$-modules, and is viewed as an $\OB$-module supported on the object $[n-1]$ (in the special case $n = 0$, we let $T' = 0$). In particular, this implies (1).

(2) If $0 \to U \to V \to W \to 0$ is a short exact sequence such that the natural map $W \to \Sigma W$ is injective, then applying the snake lemma to the commutative diagram
\begin{equation*}
\xymatrix{
0 \ar[r] & U \ar[r] \ar[d] & V \ar[r] \ar[d] & W \ar[r] \ar[d] & 0\\
0 \ar[r] & \Sigma U \ar[r] & \Sigma V \ar[r] & \Sigma W \ar[r] & 0
}
\end{equation*}
one deduces a short exact sequence
\begin{equation*}
0 \to \Delta U \to \Delta V \to \Delta W \to 0.
\end{equation*}
Combining this fact and the conclusion of Lemma \ref{natural filtration}, we know that the natural filtration
\begin{equation*}
0 = V_{\prec 0} \subseteq V_{\prec 1} \subseteq \ldots \subseteq V_{\prec n+1} = V
\end{equation*}
of $V$ with $n = t_0(V)$ gives a filtration
\begin{equation*}
0 = \Delta V_{\prec 0} \subseteq \Delta V_{\prec 1} \subseteq \ldots \subseteq \Delta V_{\prec n+1} = \Delta V.
\end{equation*}
Therefore, to show that $\Delta V$ is semi-induced, it suffices to show that
\begin{equation*}
\Delta V_{\prec i}/\Delta V_{\prec i-1} \cong \Delta (V_{\prec i} / V_{\prec i-1})
\end{equation*}
is an induced module for each $i \in [n+1]$. But this is clear from the proof of (1) since
\begin{equation*}
V_{\prec i} / V_{\prec i-1} \cong \rho^{\ast} ((V_{\prec i} / V_{\prec i-1})_{i-1}).
\end{equation*}

We have shown that $\Delta V$ is semi-induced. Clearly, $\Sigma V$ is semi-induced as well by the short exact sequence $0 \to V \to \Sigma V \to \Delta V \to 0$.

(3) First we handle the case that $V$ is generated by its value $V_n$ on a certain object $[n]$. If $n = 0$, then $\Delta V = 0$, so $V \cong \Sigma V$. In this case, one knows that $V \cong \rho^{\ast}(V_0)$ is induced. If $n > 0$, then by the short exact sequence
\begin{equation*}
0 \to W \to \rho^{\ast}(V_n) \to V \to 0
\end{equation*}
and the assumption that the map $V \to \Sigma V$ is injective, we obtain a short exact sequence
\begin{equation*}
0 \to \Delta W \to \Delta \rho^{\ast}(V_n) \to \Delta V \to 0.
\end{equation*}
By the given condition, $\Delta V$ is an induced module generated by its value on the object $[n-1]$, and $\Delta \rho^{\ast}(V_n)$ is also an induced module generated by its value on the object $[n-1]$. Since their values on $[n-1]$ coincides, we conclude that $\Delta W = 0$, which forces $W = 0$ since otherwise
\begin{equation*}
t_0(\Delta W) = t_0(W) - 1 > n - 1 \geqslant 0
\end{equation*}
Consequently, $V \cong \rho^{\ast}(V_n)$ is induced.

Now we consider the general case. We use an induction on $t_0(V) = n$. The conclusion holds for $n = 0$. Suppose that $n \geqslant 1$. Let
\begin{equation*}
0 \to W \to P \to V \to 0
\end{equation*}
be a short exact sequence such that $t_0(P) = t_0(V)$. It induces a short exact sequence
\begin{equation*}
0 \to \Delta W \to \Delta P \to \Delta V \to 0
\end{equation*}
since the natural map $V \to \Sigma V$ is injective. Therefore, we have
\begin{equation*}
t_1(V) \leqslant t_0(W) \leqslant t_0(\Delta W) + 1 \leqslant \max \{ t_0(\Delta P) + 1, \, t_1(\Delta V) + 1 \} = t_0(\Delta P) + 1 = t_0(V)
\end{equation*}
since $\h^{\OI}_1(\Delta V) = 0$ and $t_0(\Delta P) = t_0(P) - 1 = t_0(V) -1$. Thus from the short exact sequence
\begin{equation}
0 \to V_{\prec n} \to V \to V/V_{\prec n} \to 0
\end{equation}
we deduce
\begin{equation*}
t_1(V/V_{\prec n}) \leqslant \max \{t_0(V_{\prec n}), \, t_1(V) \} \leqslant \max \{n-1, \, t_0(V)\} = n.
\end{equation*}
By looking at the short exact sequence
\begin{equation*}
0 \to M \to \rho^{\ast} ((V/V_{\prec n})_n) \to V/V_{\prec n} \to 0,
\end{equation*}
we conclude that $t_1(V/V_{\prec n}) \leqslant n$ happens if and only if $M = 0$ since otherwise $M$ is supported on objects $[m]$ with $m > n$; that is, $V/V_{\prec n}$ is an induced module. Therefore, the natural map $V/V_{\prec n} \to \Sigma (V/V_{\prec n})$ is injective, and we get a short exact sequence
\begin{equation*}
0 \to \Delta V_{\prec n} \to \Delta V \to \Delta (V/V_{\prec n}) \to 0.
\end{equation*}
Since the last two terms in it are semi-induced, so is the first term by considering the long exact sequence of homology groups. But the natural map $V_{\prec n} \to \Sigma V_{\prec n}$ is also injective. By the induction hypothesis, $V_{\prec n}$ is semi-induced. Consequently, $V$ is semi-induced as well.
\end{proof}

Now we are ready to prove Theorem \ref{homologically acyclic modules}, which is a combination of Proposition \ref{part 1} and the following proposition.

\begin{proposition}
Let $0 \to U \to V \to W \to 0$ be a short exact sequence of $\OI$-modules presented in finite degrees. If two terms are semi-induced, so is the third one. In particular, if $\h^{\ast}_i(V) = 0$ for a certain $i \in \mathbb{N}$, then $V$ is semi-induced.
\end{proposition}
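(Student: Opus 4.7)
The proposition splits into two assertions: (i) the three-way closure of short exact sequences of $\OI$-modules under the semi-induced property, and (ii) the characterization of semi-inducedness by vanishing of a single homology group. Part (ii) reduces to (i) by standard dimension shifting. For $i = 0$, $\h^{\OI}_0(V) = 0$ forces $V = 0$; for $i = 1$, Proposition \ref{part 1} applies; and for $i \geq 2$, choose a short exact sequence $0 \to K \to P \to V \to 0$ with $P$ projective (hence semi-induced). The long exact sequence of homology yields $\h^{\OI}_{i-1}(K) \cong \h^{\OI}_i(V) = 0$, so by induction on $i$ the module $K$ is semi-induced, and part (i) applied to this sequence (with $K, P$ semi-induced) concludes that $V$ is semi-induced.

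Of the three cases of (i), two are immediate from the long exact sequence of homology combined with Proposition \ref{part 1}. If $V, W$ (or $U, W$) are semi-induced, then both of them have $\h^{\OI}_j = 0$ for all $j \geq 1$, and a direct reading of the LES forces the same vanishing on the remaining module; Proposition \ref{part 1} then gives that the third module is semi-induced.

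The substantive case is $U, V$ semi-induced $\Rightarrow W$ semi-induced. Here the LES yields $\h^{\OI}_i(W) = 0$ for $i \geq 2$ automatically, leaving the task of showing $\h^{\OI}_1(W) = \ker(\h^{\OI}_0(U) \to \h^{\OI}_0(V)) = 0$. I would prove this by induction on $d = t_0(V)$, with Lemma \ref{properties of semi-induced modules}(3) ($\Delta$-predominance) as the key tool. In the base case $d = 0$, $V \cong M(0) \otimes A$ is constant, and a growth/embedding argument (an induced piece $M(e) \otimes T$ with $e \geq 1$ has polynomial growth in $n$ and cannot inject into a constant $\OI$-module) forces every semi-induced submodule to be constant of the form $M(0) \otimes B$; hence $W \cong M(0) \otimes (A/B)$ is semi-induced. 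For the inductive step, apply the snake lemma to the SES together with the natural maps $X \to \Sigma X$. Since $\kappa U = \kappa V = 0$ by Lemma \ref{properties of semi-induced modules}(1) and $\Delta U, \Delta V$ are semi-induced of smaller generation degree by Lemma \ref{properties of semi-induced modules}(2), one obtains the four-term exact sequence
\[
0 \to \kappa W \to \Delta U \to \Delta V \to \Delta W \to 0.
\]
The inductive hypothesis, applied appropriately to the subquotients of this sequence, yields that $\Delta W$ is semi-induced; a direct combinatorial verification on the induced building blocks of $V$ (for $V = M(d) \otimes T$ and $U = M(d) \otimes S \subseteq V$ one checks $\iota V_n \cap U_{n+1} = \iota U_n$) gives $\kappa W = 0$. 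A second application of Lemma \ref{properties of semi-induced modules}(3) then concludes that $W$ is semi-induced.

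The main obstacle is the verification of $\kappa W = 0$ in the substantive case: this torsion-freeness is not visible from the homological characterization of semi-induced modules and requires exploiting the combinatorial structure of $\OI$-morphisms explicitly, together with a careful reduction through the filtration of $V$ by induced building blocks.
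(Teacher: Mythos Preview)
Your overall architecture matches the paper exactly: the dimension-shifting for part (ii), the two easy cases of (i) via the long exact sequence and Proposition \ref{part 1}, and for the hard case ($U,V$ semi-induced) the snake-lemma four-term sequence
\[
0 \to \kappa W \to \Delta U \to \Delta V \to \Delta W \to 0
\]
together with induction on generation degree and Lemma \ref{properties of semi-induced modules}(3). The only substantive divergence is in how you dispose of $\kappa W$.

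Your proposed route---reduce to $V = M(d)\otimes T$ and $U = M(d)\otimes S$ via the filtration of $V$ by induced pieces---has a real gap. A semi-induced submodule $U$ of an induced module $M(d)\otimes T$ need not be of the form $M(d)\otimes S$, and the natural attempt to filter $U$ by $U\cap V_{\prec i}$ does not produce pieces you know to be semi-induced: that would require knowing that quotients of semi-induced modules by semi-induced submodules are semi-induced, which is precisely the statement under proof. So the reduction is circular as stated, and I do not see a clean way to repair it without essentially reproving $\kappa W=0$ by other means. Note also that you cannot apply the inductive hypothesis to ``subquotients of this sequence'' to get $\Delta W$ semi-induced \emph{before} knowing $\kappa W=0$: the image $Z$ of $\Delta U\to\Delta V$ is $\Delta U/\kappa W$, and you have no control over it until $\kappa W$ vanishes.

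The paper's argument for $\kappa W=0$ is a one-line observation that bypasses all of this. The connecting map embeds $\kappa W$ as an $\OI$-submodule of $\Delta U$, and $\Delta U$ is semi-induced by Lemma \ref{properties of semi-induced modules}(2). Now any submodule $N$ of a semi-induced module $M$ has injective $\iota$, simply because $\iota\colon N_n\to N_{n+1}\subseteq M_{n+1}$ factors through the injective map $M_n\to M_{n+1}$. On the other hand, for $w\in(\kappa W)_n$ one has $\iota w=0$ already in $W_{n+1}$, so the natural map $\kappa W\to\Sigma(\kappa W)$ is identically zero. An injective zero map forces $\kappa W=0$. Once you have this, your induction and Lemma \ref{properties of semi-induced modules}(3) finish the proof exactly as you outlined.
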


\begin{proof}
If $U$ and $W$ are semi-induced, clearly $V$ is semi-induced as well. If $V$ and $W$ are semi-induced, then by looking at the long exact sequence of homology groups we observe that $\h^{\OI}_1(U) = 0$, so $U$ is semi-induced. Now assume that both $U$ and $V$ are semi-induced, and apply an induction on the invariant $n = \max \{t_0(U), \, t_0(V) \}$. The conclusion holds trivially for $n = -1$, so we assume that $n \geqslant 0$.

Note that the commutative diagram
\begin{equation*}
\xymatrix{
0 \ar[r] & U \ar[r] \ar[d] & V \ar[r] \ar[d] & W \ar[r] \ar_{\delta}[d] & 0\\
0 \ar[r] & \Sigma U \ar[r] & \Sigma V \ar[r] & \Sigma W \ar[r] & 0
}
\end{equation*}
gives rise to an exact sequence
\begin{equation*}
0 \to \Ker \delta \to \Delta U \to \Delta V \to \Delta W \to 0,
\end{equation*}
so we can identify $\Ker \delta$ with a submodule of $\Delta U$. But since $U$ is semi-induced, $\Delta U$ is semi-induced as well. Therefore, for any nonzero submodule $U' \subseteq \Delta U$, the natural map $U' \to \Sigma U'$ is injective, and hence nonzero. But the natural map $\Ker \delta \to \Sigma \Ker \delta$ is 0. This forces $\Ker \delta$ to be 0. Thus we obtain a short exact sequence
\begin{equation*}
0 \to \Delta U \to \Delta V \to \Delta W \to 0.
\end{equation*}
Note that the first two terms in this sequence are semi-induced, and
\begin{equation*}
\max \{t_0(\Delta U), \, t_0(\Delta V) \} < \max \{t_0(U), \, t_0(V) \}.
\end{equation*}
By the induction hypothesis, $\Delta W$ is semi-induced. By (3) of Lemma \ref{properties of semi-induced modules}, $W$ is semi-induced as well.

Now suppose that $\h^{\OI}_i(V) = 0$. If $i = 0$, then $V = 0$; if $i = 1$, then $V$ is semi-induced by Proposition \ref{part 1}. Suppose that $i > 1$. Consider a short exact sequence
\begin{equation*}
0 \to W \to P \to V \to 0.
\end{equation*}
Then $\h^{\OI}_{i-1} (W) = 0$. By the induction hypothesis, $W$ is semi-induced, so is $V$.
\end{proof}

With careful modifications, main results and their proofs in \cite[Subsection 4.2]{ly} for finitely generated $\FI$-modules over Noetherian coefficient rings actually hold for $\OI$-modules presented in finite degrees. In particular, we classify all $\OI$-modules presented in finite degrees whose projective dimension is finite.

\begin{theorem}
Let $V$ be an $\OI$-module presented in finite degrees with $t_0(V) = n$. Then the projective dimension $\pd_{\OI} (V)$ is finite if and only if the following conditions hold:
\begin{enumerate}
\item $V$ is semi-induced;
\item for $i \in [n+1]$, $\pd_{\kk} ((V_{\prec i} / V_{\prec i-1})_{i-1}) < \infty$.
\end{enumerate}
In this case, one has
\begin{equation*}
\pd_{\OI} (V) = \max \{ \pd_{\kk} ((V_{\prec i} / V_{\prec i-1})_{i-1}) \mid i \in [n+1] \}.
\end{equation*}
\end{theorem}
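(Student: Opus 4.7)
The plan is to first establish the projective dimension formula for a single induced module, and then bootstrap via the natural filtration of Lemma \ref{natural filtration}. For the forward direction, the assumption $\pd_{\OI}(V) < \infty$ immediately gives $\h^{\OI}_k(V) = 0$ for all $k > \pd_{\OI}(V)$, so by Theorem \ref{homologically acyclic modules}, $V$ is semi-induced, establishing (1). It then remains only to deduce (2) and the explicit formula for $\pd_{\OI}(V)$.

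The key preliminary lemma I would prove is: for any $\OB$-module $S$ supported at a single object $[m]$, one has $\pd_{\OI}(\rho^{\ast}(S)) = \pd_{\kk}(S_m)$. The upper bound uses that $\rho^{\ast}$ is exact---concretely $\rho^{\ast}(S)_n \cong \kk\OI([m],[n]) \otimes_{\kk} S_m$ with $\kk\OI([m],[n])$ a free $\kk$-module---and that $\rho^{\ast}$ sends $\kk$-projectives to $\OI$-projectives, since it is left adjoint to the exact restriction functor $\epsilon^{\ast}$; hence a length-$k$ $\kk$-projective resolution of $S_m$ lifts to an $\OI$-projective resolution of $\rho^{\ast}(S)$ of the same length. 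The lower bound comes from the derived adjunction $\mathrm{Ext}^k_{\OI}(\rho^{\ast}(S), W) \cong \mathrm{Ext}^k_{\OB}(S, \epsilon^{\ast}(W))$, valid since $\rho^{\ast}$ preserves projectives and $\epsilon^{\ast}$ is exact; for any $\kk$-module $A$ one can take $W = \rho^{\ast}(\widetilde{A})$ with $\widetilde{A}$ supported at $[m]$ so that the right-hand side evaluates to $\mathrm{Ext}^k_{\kk}(S_m, A)$, forcing $\pd_{\kk}(S_m) \leq \pd_{\OI}(\rho^{\ast}(S))$.

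With this in hand, I would prove the main formula for semi-induced $V$ in two steps, setting $T_i = (V_{\prec i}/V_{\prec i-1})_{i-1}$ so that each $V_{\prec i}/V_{\prec i-1}$ is $\rho^{\ast}$ applied to the $\OB$-module supported at $[i-1]$ with value $T_i$. The inequality $\pd_{\OI}(V) \leq \max_i \pd_{\kk}(T_i)$ follows by induction on $i$ from the short exact sequences $0 \to V_{\prec i-1} \to V_{\prec i} \to \rho^{\ast}(T_i) \to 0$ and the standard extension bound for projective dimension. For the reverse inequality I would introduce, for each $i$ and each $\kk$-module $A$, the \emph{concentrated} $\OI$-module $W$ with $W_{i-1} = A$, $W_k = 0$ for $k \neq i-1$, and all structure maps zero. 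The adjunction above then gives $\mathrm{Ext}^k_{\OI}(\rho^{\ast}(T_j), W) \cong \mathrm{Ext}^k_{\kk}(T_j, (\epsilon^{\ast} W)_{j-1})$, which vanishes for $j \neq i$ and equals $\mathrm{Ext}^k_{\kk}(T_i, A)$ for $j = i$. Peeling off the natural filtration of $V$ through successive long exact sequences therefore collapses every contribution except the $i$-th, yielding $\mathrm{Ext}^k_{\OI}(V, W) \cong \mathrm{Ext}^k_{\kk}(T_i, A)$ and hence $\pd_{\kk}(T_i) \leq \pd_{\OI}(V)$.

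The principal obstacle is verifying the derived adjunction between $\rho^{\ast}$ and $\epsilon^{\ast}$ at the level of $\mathrm{Ext}$, which requires both exactness of $\rho^{\ast}$ and its preservation of projectives; both facts follow from the explicit formula $\rho^{\ast}(S)_n = \bigoplus_m \kk\OI([m],[n]) \otimes_{\kk} S_m$ together with the freeness of each $\kk\OI([m],[n])$ as a $\kk$-module. A secondary concern is checking that the concentrated module $W$ is a legitimate $\OI$-module: this is automatic, since every morphism in $\OI$ between objects of differing size is forced to act as zero on $W$, while the only endomorphism of $[i-1]$ is the identity. Combining the two inequalities yields both condition (2) and the asserted equality $\pd_{\OI}(V) = \max_{i \in [n+1]} \pd_{\kk}(T_i)$, and the backward implication (from (1) and (2) to finiteness of $\pd_{\OI}(V)$) is then immediate from the upper bound.
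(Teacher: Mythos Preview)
Your argument is correct and complete. The paper itself does not actually prove this theorem: it simply states that ``with careful modifications, main results and their proofs in \cite[Subsection 4.2]{ly}'' for $\FI$-modules carry over to $\OI$-modules presented in finite degrees, and then records the statement without further justification. You have therefore supplied strictly more than the paper does.

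Your route is the natural one and almost certainly parallels what \cite{ly} does for $\FI$: first reduce to a single induced module via Lemma~\ref{natural filtration}, establish $\pd_{\OI}(\rho^{\ast}S) = \pd_{\kk}(S_m)$ using the induction--restriction adjunction (exactness of $\rho^{\ast}$ for the upper bound, the derived adjunction $\mathrm{Ext}^k_{\OI}(\rho^{\ast}S, -) \cong \mathrm{Ext}^k_{\kk}(S_m, (-)_m)$ for the lower bound), and then assemble the filtration pieces. The use of degree-concentrated test modules $W$ to isolate a single layer of the filtration is exactly the right device to make the long exact sequences collapse, and your observation that Theorem~\ref{homologically acyclic modules} immediately forces $V$ to be semi-induced once $\pd_{\OI}(V) < \infty$ is the clean way to handle the forward implication. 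One small clarification worth recording when you write this up: the adjunction you need is that $\rho^{\ast}$ (induction) is \emph{left} adjoint to the restriction $\epsilon^{\ast}$, which is what you use; the paper's description of $\rho$ itself as a left adjoint at the level of categories is somewhat loose, but your interpretation is the intended one.
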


In particular, if $\kk$ has global dimension 0, then $V$ has finite projective dimension if and only if it is projective.

\section{Further questions}

There are still quite a lot of questions to be answered for a satisfactory understanding on the complete picture of $\OI$-modules. Here we list a few question, which we believe deserve further research.

\begin{question}
Develop a torsion theory and a local cohomology theory in the category of $\OI$-modules presented in finite degrees, as the second author and Ramos did for $\FI$-modules in \cite{lr}.
\end{question}

This question has been answered for $\FI$-modules via using the following crucial fact: for $\FI$-modules, the functors $\Sigma$ and $\Delta$ commutes; that is, $\Sigma \circ \Delta \cong \Delta \circ \Sigma$. Unfortunately, this does not hold any longer for $\OI$-modules. We also remark that when $\kk$ is a field,  G\"{u}nt\"{u}rk\"{u}n and Snowden provided answers for graded modules of the increasing monoid.

As Church, Miller, Nagpal and Reinhold did in \cite{cmnr} for $\FI$-modules, for an $\OI$-module $V$ presented in finite degrees, we define its stable degree to be
\[ \std(V) = \min \{ t_0(\Sigma^n V) \mid n \in \N \}. \]
Since $t_0(\Sigma V) \leqslant t_0(V) < \infty$, we know that $\std(V)$ is finite. Furthermore, there exists a number $N \in \N$ such that
\[ t_0(\Sigma^N V) = t_0(\Sigma^{N+1} V) = \ldots. \]
Our next question is:

\begin{question}
Let $V$ be an $\OI$-module presented in finite degrees. Describe an upper bound for $N$ in terms of $t_0(V)$ and $t_1(V)$ such that $t_0(\Sigma^n V) = \std(V)$ for $n \geqslant N$.
\end{question}

Theorem \ref{filtration stability} also raises a few interesting questions.

\begin{question}
Let $V$ be an $\OI$-module presented in finite degrees and use the notation in Theorem \ref{filtration stability}.
\begin{enumerate}
\item Describe the $\OI$-modules in the finite set $\mathscr{F}_V$ (although $\mathscr{F}_V$ might not be unique).
\item Describe an upper bound for $N$ such that for $n \geqslant N$, $\Sigma^n V$ has a filtration for which all successive quotients lie in $\mathscr{F}_V$.
\item Describe the asymptotic behavior of multiplicities $c_i$ such that a filtration of $\Sigma^n V$ for $n \gg 0$ contains exactly $c_i$ copies of successive quotients isomorphic to $V_i \in \mathscr{F}_V$.
\end{enumerate}
\end{question}


\begin{thebibliography}{99}

\bibitem{ce}
Church, Thomas; Ellenberg, Jordan S.
Homology of FI-modules.
Geom. Topol. 21 (2017), no. 4, 2373--2418.  arXiv:1506.01022v2.

\bibitem{cef}
Church, Thomas; Ellenberg, Jordan S.; Farb, Benson.
FI-modules and stability for representations of symmetric groups.
Duke Math. J. 164 (2015), no. 9, 1833--1910.  arXiv:1204.4533v4.

\bibitem{cefn}
Church, Thomas; Ellenberg, Jordan S.; Farb, Benson; Nagpal, Rohit.
FI-modules over Noetherian rings.
Geom. Topol. 18 (2014), no. 5, 2951--2984.   arXiv:1210.1854v2.

\bibitem{cf}
Church, Thomas; Farb, Benson.
Representation theory and homological stability.
Adv. Math., (2013), 250-314.    arXiv:1008.1368v3.

\bibitem{cmnr}
Church, Thomas; Miller, Jeremy; Nagpal, Rohit; Reinhold, Jens.
Linear and quadratic ranges in representation stability.
Adv. Math. 333 (2018), 1-40.  arXiv:1706.03845v1.

\bibitem{gan}
Gan, Wee Liang.
A long exact sequence for homology of FI-modules.
New York J. Math. 22 (2016), 1487-1502.  arXiv:1602.08873v3.

\bibitem{gl}
Gan, Wee Liang; Li, Liping.
An inductive machinery for representations of categories with shift functors.
Trans. Amer. Math. Soc. 371 (2019), 8513-8534. arXiv:1610.09081.

\bibitem{gl2}
Gan, Wee Liang; Li, Liping.
Asymptotic behavior of representations of graded categories with inductive functors.
J. Pure Appl. Algebra 223 (2019), 188-217.  arXiv:1705.00882v5.

\bibitem{gl3}
Gan, Wee Liang; Li, Liping.
Bounds on homological invariants of VI-modules.
to appear in Mich. Math. J. arXiv:1710.10233v2.

\bibitem{gs}
G\"{u}nt\"{u}rk\"{u}n, Sema; Snowden, Andrew.
The representation theory of the increasing monoid.
Preprint.   arXiv:1812.12042v1.

\bibitem{li}
Li, Liping.
Upper bounds of homological invariants of FI$_G$-modules.
Arch. Math. (Basel) 107 (2016), no. 3, 201-211.   arXiv:1512.05879v3.

\bibitem{lr}
Li, Liping; Ramos, Eric.
Depth and the local cohomology of FI$_G$-modules.
Adv. Math. 329 (2018), 704-741. arXiv:1602.04405v3.

\bibitem{ly}
Li, Liping; Yu, Nina.
Filtrations and homological degrees of FI-modules.
J. Algebra 472 (2017), 369-398.   arXiv:1511.02977v3.

\bibitem{mw}
Miller, Jeremy; Wilson, Jennifer C. H.
FI-hyperhomology and ordered configuration spaces.
Proc. Amer. Math. Soc. 148 (2020), no. 3, 993-1002.  arXiv:1903.02722.

\bibitem{nagpal}
Nagpal, Rohit.
VI-modules in non-describing characteristic, part I.
Algebra Number Theory 13 (2019), no. 9, 2151-2189.   arXiv:1709.07591v3.

\bibitem{ns}
Nagpal, Rohit; Snowden, Andrew.
Periodicity in the cohomology of symmetric groups via divided powers.
Proc. Lond. Math. Soc. 116 (2018), 1244-1268. arXiv:1705.10028v2.

\bibitem{pss}
Putman, Andrew; Sam, Steven V.; Snowden, Andrew.
Stability in the homology of unipotent groups.
Algebra Number Theory 14 (2020), no. 1, 119-154.   arXiv:1711.11080v4.

\bibitem{ramos}
Ramos, Eric.
Homological invariants of FI-modules and FI$_G$-modules.
J. Algebra 502 (2018), 163-195. arXiv:1511.03964v3.

\bibitem{ss}
Sam, Steven V.; Snowden, Andrew.
Gr\"{o}bner methods for representations of combinatorial categories.
J. Amer. Math. Soc. 30 (2017), 159–203. arXiv:1409.1670v3.


\end{thebibliography}
\end{document}